\documentclass[reqno,12pt]{amsart}
\usepackage{amsmath,amssymb,amsthm,mathrsfs}
\usepackage{epsfig,color}
\usepackage[latin1]{inputenc}
\usepackage[english]{babel}
\usepackage[numbers]{natbib}
\usepackage[colorlinks, citecolor=blue, linkcolor=red]{hyperref}
\usepackage{dsfont}

\usepackage{hyperref}

\voffset=-1.5cm \textheight=23cm \hoffset=-.5cm \textwidth=16cm
\oddsidemargin=1cm \evensidemargin=-.1cm
\footskip=35pt
\parindent=20pt

\numberwithin{equation}{section}

\def\H{\mathbb{H}}

\newtheorem{ackn}{Acknowledgments\!}

\def\00{{\bf 0}}

\def\RR{\mathds R}


\newcommand{\eps}{{\varepsilon}}

\newtheorem*{theorem*}{Theorem}
\newtheorem{theorem}{Theorem}[section]
\newtheorem{lemma}[theorem]{Lemma}
\newtheorem{proposition}[theorem]{Proposition}

\newtheorem{remark}[theorem]{Remark}

 \begin{document}
    \title[A Liouville theorem in the Heisenberg group]{A Liouville theorem in the Heisenberg group}

  \date{}

\author{Giovanni Catino, Yanyan Li, Dario D. Monticelli, Alberto Roncoroni}

\address{G. Catino, Dipartimento di Matematica, Politecnico di Milano, Piazza Leonardo da Vinci 32, 20133, Milano, Italy.}
\email{giovanni.catino@polimi.it}

\address{Y.Y. Li, Department of Mathematics, Rutgers University, 110 Frelinghuysen Road, Piscataway, NJ, USA}
\email{yyli@math.rutgers.edu}

\address{D. Monticelli, Dipartimento di Matematica, Politecnico di Milano, Piazza Leonardo da Vinci 32, 20133, Milano, Italy.}
\email{dario.monticelli@polimi.it}

\address{A. Roncoroni, Dipartimento di Matematica, Politecnico di Milano, Piazza Leonardo da Vinci 32, 20133, Milano, Italy.}
\email{alberto.roncoroni@polimi.it}

\begin{abstract} In this paper we classify positive solutions to the critical semilinear elliptic  equation in $\mathbb{H}^n$. We prove that they are the Jerison-Lee's bubbles, provided $n=1$ or $n\geq 2$ and a suitable control at infinity holds.  The proofs are based on a classical Jerison-Lee's differential identity and on pointwise/integral estimates recently obtained for critical semilinear and quasilinear elliptic equations in $\mathbb{R}^n$. In particular, the result in $\mathbb{H}^1$ can be seen as the analogue of the celebrated Caffarelli-Gidas-Spruck classification theorem.
\end{abstract}

\maketitle

\begin{center}

\noindent{\it Key Words:} Heisenberg group, semilinear elliptic equation, critical exponent, Liouville theorem,  CR Yamabe problem, CR Sobolev inequality.

\medskip

\noindent{\bf AMS subject classification:} 35J61, 32V20, 35B33

\end{center}

\

\

\section{Introduction}

In this paper,  we consider solutions to the following {\em critical semilinear}  elliptic equation
\begin{equation}\label{heis_crit}
-\Delta_{\mathbb{H}^n}u = 2n^2 u^{q^{\ast}}\, \quad \text{in } \mathbb{H}^n
\end{equation}
where $\mathbb{H}^n$ is the Heisenberg group,  $u$ is a smooth, real and positive function defined in $\mathbb{H}^n$,  $\Delta_{\mathbb{H}^n}u$ is the Heisenberg Laplacian (or sub-Lapacian) of $u$ (see the definition in Section \ref{prel}) and
$$
q^{\ast}:=\frac{Q+2}{Q-2}
$$
with $Q=2n+2$ the homogeneous dimension of $\mathbb{H}^n$.

Equation \eqref{heis_crit} has been deeply studied since it is connected with the CR Yamabe problem in $\mathbb{H}^n$ and with the CR Sobolev inequality. The CR Yamabe problem on $\mathbb{H}^n$ is the following: given $(\H^n,\Theta)$ the sub-Riemannian manifold with standard contact form $\Theta$, consider the conformal contact form $\tilde\Theta=u^{\frac{2}{n}}\Theta$ on $\mathbb{H}^n$, then the pseudo-Hermitian scalar curvature associated to $\tilde\Theta$ is a positive constant, $R\equiv 4n(n+1)$, if and only if $u$ solves equation \eqref{heis_crit}.  The CR Yamabe problem has been studied in \cite{JL_CR1,JL,JL_CR2} and has been partially solved in \cite{Gam1,Gam2,Wang}; we also refer to the recent papers \cite{ChCh,CMY} for further developments. Moreover, the number
$$
q^{\ast}+1=\frac{2Q}{Q-2}\, ,
$$
is the critical exponent for the CR Sobolev embedding (or Folland-Stein inequality \cite{FS}).  Thanks to the work \cite{JL} we know that there are (nontrivial) positive solutions of \eqref{heis_crit} given by
\begin{equation}\label{tal}
\mathcal{U}_{\lambda,\mu}(z,t):=\dfrac{C}{\vert t+i \vert z\vert^2 + z\cdot\mu + \lambda \vert^n} \, ,
\end{equation}
for some $\lambda\in\mathbb{C}$, $\mu\in\mathbb{C}^n$ such that $\mathrm{Im}(\lambda)>\frac{\vert\mu\vert^2}{4}$ and for some explicit $C=C(n,\lambda)>0$. The functions in \eqref{tal} are the only extremals of the Folland-Stein inequality in $\mathbb{H}^n$ and are usually called {\em Jerison-Lee's bubbles}. Moreover, in \cite{JL} the authors obtained that \eqref{tal} are the unique positive solutions of \eqref{heis_crit}, satisfying the finite energy assumption $u\in L^{\frac{2Q}{Q-2}}(\mathbb{H}^n)$. We also refer to \cite{GarVas} where the authors obtained a uniqueness result under the assumption of cylindrical symmetry on groups of Heisenberg type.

Since it is well known that all nonnegative solutions of \eqref{heis_crit} are either strictly positive or identically zero, as the sub-Laplacian on the Heisenberg group satisfies the strong maximum principle (see e.g. \cite{Bony}), we will only focus on positive solutions. We also recall that there exist infinitely many nonradial sign-changing solutions with finite energy to
$$
-\Delta_{\mathbb{H}^n}u = 2n^2 |u|^{q^{\ast}-1}u\, \quad \text{in } \mathbb{H}^n,
$$
as proved in \cite{MaMa}.

\

\noindent In the subcritical case, i.e.
$$
-\Delta_{\mathbb{H}^n}u = 2n^2 u^{q}\, \quad \text{in } \mathbb{H}^n
$$
where $1<q<q^\ast$ it is known that the only nonnegative solution is the trivial one (see \cite{OuMa}, and \cite{BDC,BP,Xu} for previous partial results).

\

The analogue of \eqref{heis_crit} in the Euclidean space is the so-called critical Laplace equation
\begin{equation}\label{crit}
-\Delta u=u^{2^\ast-1} \quad \text{ in } \mathbb{R}^n\, ,
\end{equation}
where $2^*=\frac{2n}{n-2}$ is the critical Sobolev exponent. Equation \eqref{crit} is related to the Yamabe problem in Riemannian geometry (see the survey \cite{LP}) and to the extremals in the Sobolev inequality (see the survey \cite{Ron}).  From \cite{Rod}, \cite{Aubin} and \cite{Talenti} we know that the following class of functions
\begin{equation} \label{tal_R}
\mathcal V_{\lambda,x_0} (x) := \left( \frac{\lambda\sqrt{n (n-2)}}{\lambda^2 + |x-x_0|^2} \right)^{\frac{n-2}{2}} \,, \quad \lambda >0 \,, \ x_0 \in \mathbb{R}^n \,,
\end{equation}
solve \eqref{crit}. Moreover, from the seminal paper \cite{CGS} (see also \cite{GNN,Obata} for previous important results) we know that \eqref{tal_R} are the only positive solutions to \eqref{crit} (see also \cite{ChenLi} and \cite{LiZhang}). The proof of the classification result is based on the technique of moving planes and on the Kelvin transform. An alternative proof, based on integral estimates that can be applied also in the Riemannian setting when the Ricci curvature is non-negative, has been  recently obtained in \cite{CaMo} when the dimension is $n=3$, and was extended to dimensions $n=4,5$  in \cite{Ou, Vet_bis} respectively.

In the subcritical case, i.e.
\begin{equation}\label{subcrit}
-\Delta u=u^{q} \quad \text{ in } \mathbb{R}^n\,
\end{equation}
with $1<q<2^*-1$, it is well-known that the only nonnegative solution is the trivial one (see \cite{GS}). Recently, also the critical $p-$Laplace equation has been considered, we refer the interested reader to \cite{CMR,CFR,Ou,Ron_BP,Sci,Vet,Vet_bis}.

Our main results are a classification of \emph{all} positive solutions
 to \eqref{heis_crit} in $\mathbb{H}^1$ (see Theorem \ref{teo1}), and a classification of positive solutions to \eqref{heis_crit} in $\mathbb{H}^n$ when $n\geq2$
that satisfy a suitable decay condition at infinity, which is weaker than finite energy assumption (see Theorem \ref{teo2}). Indeed we have

\begin{theorem}\label{teo1}
 Let $u$ be a positive solution to \eqref{heis_crit} in $\H^1$. Then $$u \equiv \mathcal{U}_{\lambda,\mu}$$
 for some $\lambda\in\mathbb{C}$, $\mu\in\mathbb{C}^n$ such that $\mathrm{Im}(\lambda)>\frac{\vert\mu\vert^2}{4}$.
\end{theorem}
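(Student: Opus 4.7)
The plan is to combine the classical Jerison-Lee differential identity with a cutoff-integration argument, supported by pointwise/integral estimates in the spirit of the recent Euclidean work of Catino-Monticelli. The first step is to switch to the conformal variable $\phi := u^{-2/n} = u^{-2}$; since $u$ solves \eqref{heis_crit}, the contact form $\tilde\Theta = u^{2/n}\Theta$ has constant Webster scalar curvature, and Jerison and Lee derived in \cite{JL} a pointwise divergence identity of the schematic form
\begin{equation*}
\operatorname{div}_{\H^1}\bigl(X(\phi,\nabla_{\H^1}\phi,T\phi)\bigr) = |E|^2 + |F|^2,
\end{equation*}
where $E$ and $F$ are two natural trace-free tensors built from second horizontal and mixed derivatives of $\phi$, whose simultaneous vanishing on $\H^1$ characterises exactly the Jerison-Lee bubbles \eqref{tal}.

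Next I would test this identity against a standard Heisenberg cutoff $\eta_R$, equal to $1$ on the Carnot-Carath\'eodory ball $B_R$ and supported in $B_{2R}$, with $|\nabla_{\H^1}\eta_R|\leq C R^{-1}$ and $|T\eta_R|\leq C R^{-2}$. After integration by parts the quantity $\int \eta_R^k (|E|^2+|F|^2)\,dV$ appears on one side, and the other side is a remainder of the form
\begin{equation*}
\int_{B_{2R}\setminus B_R} \bigl|\text{derivatives of }\eta_R\bigr|\cdot\Psi\bigl(u,\nabla_{\H^1}u,Tu\bigr)\, dV,
\end{equation*}
for an explicit polynomial expression $\Psi$. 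If this remainder vanishes in the limit $R\to\infty$, then $E\equiv F\equiv 0$ pointwise, and the rigidity portion of \cite{JL} applied to the algebraic/differential system satisfied by $\phi$ forces $u \equiv \mathcal{U}_{\lambda,\mu}$ for some admissible parameters.

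The crux is the decay of the remainder. Here one exploits a sub-Riemannian analogue of the pointwise Bernstein-type estimates and local $L^p$ bounds used in \cite{CaMo,Ou,Vet_bis} for the Euclidean critical equation. Starting from \eqref{heis_crit} together with its first and second horizontal derivatives, one derives a differential inequality for $|\nabla_{\H^1}\log u|^2$, and, iterating by Moser-type arguments, one obtains local integrability estimates for $u^{q^\ast+1}$ and for $|\nabla_{\H^1}u|^2 u^\alpha$ on the annuli $B_{2R}\setminus B_R$, at precisely the scale required to compensate the $R^{-1}$ and $R^{-2}$ gains coming from $\eta_R$. The fact that $\H^1$ has the lowest possible homogeneous dimension $Q=4$ ensures that the exponents balance without an a priori decay hypothesis on $u$, mirroring the role of $n=3$ in the Euclidean result of \cite{CaMo}.

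The main obstacle I expect is the correct handling of the $T$-derivative terms in both the Jerison-Lee identity and the Bernstein-type estimates: because $T$ scales as two horizontal derivatives, the weight budget for the cutoff is tight, and one must carefully tune the exponent $k$ of $\eta_R$ and the multiplicative power of $u$ in the test function so that every piece of $\Psi$ has enough decay. It is this delicate balance that restricts the unconditional statement to $\H^1$, while in higher Heisenberg dimensions a supplementary decay hypothesis at infinity is needed, as in Theorem \ref{teo2}.
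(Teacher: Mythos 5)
Your overall scaffolding --- the Jerison--Lee divergence identity, a cutoff, and integral estimates forcing the sum of squares $\mathcal{M}$ to vanish --- is indeed the skeleton of the paper's argument, and your final appeal to the rigidity part of \cite{JL} is the right way to conclude once $\mathcal{M}\equiv 0$. But the step you yourself label ``the crux'', namely showing that the annular remainder tends to zero, is exactly where your proposal has a genuine gap. After testing the identity with $\eta_R^s$, the remainder is controlled by $R^{-2}\int_{B_R\setminus B_{R/2}}|g|^2$, where $g=|\partial f|^2+e^{2f}-if_0$ and $e^f=u^{1/n}$; in $\H^1$ the term $\int e^{4f}=\int u^{4}=\int u^{q^\ast+1}$ is precisely the critical energy on the annulus. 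Without a finite-energy or pointwise decay hypothesis there is no reason this is $o(R^2)$, and no Moser- or Bernstein-type iteration starting from the equation alone will produce such a bound --- if it did, the same scheme would prove Theorem \ref{teo2} without any decay assumption. Your analogy with the Euclidean case $n=3$ is also misplaced: the paper explicitly adapts the weighted technique of \cite{Ou,Vet_bis} (the Euclidean dimensions $4,5$), not the unweighted argument of \cite{CaMo}.

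What actually closes the argument in the paper is the insertion of the negative weight $\Psi^{-\beta}$, with $\Psi=|g|^2e^{-2f}$ and $\beta>0$ small (Lemma \ref{lemma0}), combined with two further ingredients absent from your sketch: (i) the elementary bound $\Psi^{-\beta}\le e^{-2\beta f}=u^{-2\beta}$ together with the lower bound $u\ge C|\xi|^{-(Q-2)}$ for positive superharmonic functions (Proposition \ref{stimau_basso}), which keeps the weight under control of a power $R^{4\beta}$ on $B_R$; and (ii) a circular absorption among the four integrals $\mathcal{I}_1,\dots,\mathcal{I}_4$, in which the dangerous term $\int e^{4f}\Psi^{-\beta}\eta^{s-2}$ is reabsorbed into $\mathcal{I}_2$ with a coefficient strictly less than $1$, while the remaining pieces are estimated by repeated integration by parts against the equation $f_{\alpha\bar\alpha}=-g$. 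Without this weight and the superharmonicity input, the remainder estimate your plan requires cannot be established, so as written the proof does not go through.
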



\begin{theorem}\label{teo2}
Let $u$ be a positive solution to \eqref{heis_crit} in $\H^n$, $n\geq 2$ such that
$$
u(\xi)\leq\frac{C}{1+|\xi|^\frac{Q-2}{2}}\quad\forall\xi\in \H^n,
$$
for some $C>0$. Then $$u \equiv \mathcal{U}_{\lambda,\mu}$$
for some $\lambda\in\mathbb{C}$, $\mu\in\mathbb{C}^n$ such that $\mathrm{Im}(\lambda)>\frac{\vert\mu\vert^2}{4}$.
\end{theorem}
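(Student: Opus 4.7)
The plan is to mirror the strategy used by Jerison--Lee in \cite{JL} to classify finite energy solutions, but to replace the global $L^{2Q/(Q-2)}$ hypothesis with the pointwise decay, by means of cut-off integral estimates in the spirit of \cite{CaMo,Ou,Vet_bis}. The starting point is the change of variable $f:=u^{-2/(Q-2)}$, which transforms \eqref{heis_crit} into an equation for $f$ that expresses the fact that the contact form $\tilde\Theta=f^{-1}\Theta$ (up to a normalization) has constant pseudo-Hermitian scalar curvature. In these terms the bubbles $\mathcal{U}_{\lambda,\mu}$ correspond precisely to the case in which $f$ is the modulus of a distinguished quadratic polynomial in the Heisenberg coordinates $(z,t)$.

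Next I would invoke the Jerison--Lee differential identity: there is a trace-free Hermitian $(1,1)$-tensor $D$ built from the second horizontal covariant derivatives of $f$ (together with the pseudo-Hermitian torsion terms appropriate to $\H^n$) and a vector field $F=F(f,\nabla_{\H^n} f,D)$ such that, schematically,
\begin{equation*}
\Div_{\H^n} F \;\geq\; c\,f^{-\alpha}\,|D|^2\,(\text{nonnegative density}),
\end{equation*}
with $F$ and the density computable in closed form from $f$ and its horizontal derivatives up to order three. This is the same identity that, when integrated against $1$ and combined with finite energy plus integration by parts, forces $D\equiv 0$ in the original Jerison--Lee argument; the rigidity step ``$D\equiv 0\Rightarrow u=\mathcal U_{\lambda,\mu}$'' from \cite{JL} will then be invoked at the very end without modification.

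The heart of the plan is therefore Step three: replacing the integration of the Jerison--Lee identity over $\H^n$ by an integration against a Heisenberg-radial cut-off $\eta_R$ with $\eta_R\equiv 1$ on $B_R$, $\spt\eta_R\subset B_{2R}$ and $|\nabla_{\H^n}\eta_R|\leq C/R$. The pointwise decay hypothesis gives $f(\xi)\geq c(1+|\xi|^2)$, and by standard sub-elliptic Schauder/Calder\'on--Zygmund regularity for $\Delta_{\H^n}$ (applied to the rescaled solutions $u_R(\xi)=R^{(Q-2)/2}u(\delta_R\xi)$ on unit annuli, where $\delta_R$ is the Heisenberg dilation) one also gets $|\nabla_{\H^n}f(\xi)|\leq C|\xi|$ and uniform bounds on the higher horizontal derivatives needed to control $F$ and $D$ on the annulus $B_{2R}\setminus B_R$. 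Plugging these estimates into the boundary/error terms produced by moving $\eta_R$ in and out of the divergence, one aims to show that those terms are $O(R^{-\gamma})$ for some $\gamma>0$, which upon sending $R\to\infty$ forces $\int_{\H^n}f^{-\alpha}|D|^2(\text{density})=0$ and hence $D\equiv 0$ pointwise.

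The main obstacle is precisely the borderline character of the decay: since $u^{q^\ast+1}$ is just log-divergent under the standing assumption, the natural weights that appear after the test against $\eta_R$ are all critical, and the error terms are only formally $o(1)$. One therefore has to exploit cancellations inside the Jerison--Lee identity (using that $F$ and $D$ scale consistently under Heisenberg dilations) and, if necessary, iterate the cut-off argument with a sharper choice $\eta_R=\chi(\log(|\xi|/R))$ to gain the needed logarithmic margin. Once this is achieved, the rigidity part of \cite{JL} closes the proof, producing $\lambda\in\mathbb{C}$ and $\mu\in\mathbb{C}^n$ with $\mathrm{Im}(\lambda)>|\mu|^2/4$ such that $u\equiv\mathcal U_{\lambda,\mu}$.
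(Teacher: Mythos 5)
Your overall architecture coincides with the paper's: pass to a logarithmic/conformal change of variable, invoke the Jerison--Lee divergence identity whose divergence side $\mathcal M$ is a sum of squares, test it against powers of a cut-off supported in $B_{2R}$, and use the decay hypothesis to show that the annulus error terms force $\mathcal M\equiv 0$, after which the rigidity step of \cite{JL} yields $u\equiv\mathcal U_{\lambda,\mu}$. The genuine gap sits exactly where all the work is hidden: the gradient estimate on annuli. You assert that rescaling $u_R(\xi)=R^{(Q-2)/2}u(\delta_R\xi)$ (with $\delta_R$ the anisotropic dilations) plus sub-elliptic Schauder/Calder\'on--Zygmund theory yields $|\partial f|\le C|\xi|$, i.e. $|\partial u|/u\le C/R$ on $B_{2R}\setminus B_R$. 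Schauder applied to the uniformly bounded rescaled solutions only gives the upper bound $|\partial u|\le CR^{-1}\sup_{B_{2R}\setminus B_R}u\le CR^{-Q/2}$; to divide by $u$ you need a matching \emph{lower} bound for $u$ on the annulus, and the only one available without further work is the superharmonicity bound $u\ge cR^{-(Q-2)}$ of Proposition \ref{stimau_basso}, which produces the useless estimate $|\partial u|/u\le CR^{n-1}$. Note also that your claimed bound $f\ge c(1+|\xi|^2)$ does not follow from the hypothesis: $u\le C(1+|\xi|^{(Q-2)/2})^{-1}$ only gives $f=u^{-2/(Q-2)}\ge c(1+|\xi|)$, half the bubble rate --- this is precisely why the assumed decay is borderline and why a soft regularity argument cannot close this step. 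Your route could in principle be repaired by adding a Harnack inequality for $-\Delta_{\mathbb H^n}v=Vv$ with bounded nonnegative potential on connected annuli (so that $\sup u\le C\inf u$ there, whence $|\partial u|\le CR^{-1}u$), but as written the claim does not follow from what you invoke. The paper instead devotes the entire Appendix (Proposition \ref{gradest}) to proving $\sup_{B_{2R}\setminus B_R}|\partial u|/u\le C/R$ by a Cheng--Yau type maximum-principle argument based on the CR Bochner formula, with the auxiliary function $t\left(|\nabla f|^2+\gamma t\eta f_0^2\right)$ designed to absorb the vertical-derivative terms $|\nabla f_0|^2$.

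Two further remarks. First, your worry about logarithmic divergences and the need for cut-offs of the form $\chi(\log(|\xi|/R))$ is unnecessary: once $|\partial f|^2\le C/R^2$ and $e^{2(n-1)f}=u^{2(Q-4)/(Q-2)}\le CR^{-(Q-4)}$ hold on the annulus, the relevant error integrals are $O(R^2)$ by the volume bound \eqref{volumi} and are absorbed by the prefactor $R^{-2}$ coming from $|\partial\eta|^2$; the resulting uniform bound on $\int_{\mathbb H^n}\mathcal M$, fed back into the Cauchy--Schwarz form of Lemma \ref{lemma1}, then gives $\int\mathcal M\eta^s\to 0$ with the standard cut-off. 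Second, the error terms in the cut-off argument are not controlled by $|D|^2$ alone but by $|g|^2$, which contains $f_0^2$ and $e^{4f}$; handling the $e^{2(n+1)f}$ contribution requires the reabsorption trick in \eqref{eq152} rather than a cancellation internal to the Jerison--Lee identity. Until the gradient estimate is actually established, the plan does not constitute a proof.
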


The proof of our results rely on a remarkable differential identity proved in \cite{JL}, which involves a vector field depending on the solution $u$ and its derivatives, that has nonnegative divergence whose vanishing implies that $u$ is actually one of the Jerison-Lee bubbles \eqref{tal}. Inspired by \cite{CaMo, CMR}, through a test function argument we are able to obtain integral estimates which, under the conditions stated in the theorems, imply that such divergence must vanish identically, thus giving the desired classification result. In order to obtain Theorem \ref{teo1} we need to suitably adapt the technique used in \cite{Ou, Vet_bis} to our setting; this allows us to obtain the full classification result without any extra assumption when $n=1$. In the proof of Theorem \ref{teo2}, when $n\geq2$, we also need an assumption on the behavior of the solution at infinity, which implies the validity of a useful gradient estimate on the solution $u$, that gives us the desired decay in the integral estimates.

We expect that the analogues of Theorems \ref{teo1} and \ref{teo2} should hold and yield a classification result for positive solutions of the critical sub-Laplace equation also in the Sasakian setting and in the context of Carnot groups, under appropriate geometric conditions (such as nonnegative pseudo-Hermitian Ricci curvature).

\

\noindent{\bf Organization of the paper.} In Section \ref{prel} we collect some preliminaries and notations, in Section \ref{proof_teo1} and \ref{proof_teo2} we prove Theorem \ref{teo1} and Theorem \ref{teo2}, respectively.  In Appendix \ref{grad_est} we prove a gradient estimate which is a key ingredient in the proof of Theorem \ref{teo2}.

\

\section{Preliminaries and Notations}\label{prel}

We first give a brief introduction to the Heisenberg group $\mathbb{H}^n$ with some notations (for further details we refer to \cite{book, JL_CR1, JL, Lee}). We consider
$$
\mathbb{H}^n:=\mathbb{C}^n\times\mathbb{R}
$$
with coordinates $\xi=(z,t)=(z_1,\dots,z_n,t)\in\mathbb{H}^n$ and with the group law $\circ$: given $\xi=(z,t)$ and $\zeta=(w,s)$
$$
(z,t)\circ(w,s)=\left(z+w,t+s+2\mathrm{Im} (z^\alpha\bar{w}^\alpha)\right) \, ,
$$
where here and in the sequel we use the Einstein notation sum for the Greek indices $1\leq\alpha,\beta,\gamma\leq n$. We define, for $\xi=(z,t)\in\mathbb{H}^n$, the norm
$$
\vert\xi\vert=\left( \vert z\vert^4+ t^2\right)^{\frac{1}{4}} \, ,
$$
with the associated distance function
$$
d(\xi,\zeta)=\vert \zeta^{-1}\circ \xi\vert \quad \text{for } \xi, \zeta\in\mathbb{H}^n\, ,
$$
where $\zeta^{-1}$ denotes the inverse of $\zeta$ with respect to $\circ$, i.e.  $\zeta^{-1}=-\zeta$.

We use the notation $B_R(\xi)$ for the metric ball centred at $\xi\in\mathbb{H}^n$ with radius $R>0$, i.e.
$$
B_R(\xi)=\lbrace \zeta\in\mathbb{H}^n \, : \, d(\xi, \zeta)<R\rbrace\,.
$$
If $\xi=0$, we will write $B_R:=B_R(0)$. It is well-known and it is important to recall that the volume of a metric ball is given by
\begin{equation}\label{volumi}
\vert B_r(\xi)\vert= C r^Q \, ,
\end{equation}
where $C>0$ is a positive constant, $Q=2n+2$ and $\vert\cdot\vert$ denotes the Lebesgue measure. The (even) integer $Q$ is called the homogeneous dimension of $\H^n$.

We define the following left-invariant (with respect to $\circ$) vector fields in $\mathbb{H}^n$
$$
Z_\alpha=\frac{\partial}{\partial z^\alpha}+i\bar{z}_\alpha\frac{\partial}{\partial t} \quad \text{and} \quad Z_{\bar{\alpha}}=\frac{\partial}{\partial \bar{z}^\alpha}-i z_\alpha\frac{\partial}{\partial t} \quad \text{for } \alpha=1,\dots,n.
$$
For a smooth function $f:\mathbb{H}^n\rightarrow\mathbb{C}$ we denote its derivatives by
$$
f_\alpha=Z_\alpha f \, ,\quad f_{\bar{\alpha}}=Z_{\bar{\alpha}} f\, , \quad f_0=\frac{\partial f}{\partial t} \, , \quad f_{\alpha\bar{\beta}}=Z_{\bar{\beta}}\left( Z_\alpha f\right) \, ,  \quad f_{0\alpha}=Z_{\alpha}\left(\frac{\partial f}{\partial t}\right)\, ,
$$
and so on.  There hold (see \cite{JL} and \cite{OuMa}) the following commutation rules
$$
f_{\alpha\beta}- f_{\beta\alpha}=0 \, , \quad f_{\alpha\bar{\beta}}- f_{\bar{\beta}\alpha}=2i\delta_{\alpha\bar{\beta}} f_0 \,, \quad f_{0\alpha}- f_{\alpha0}=0 \, ,
$$
$$
 f_{\alpha\beta 0}-f_{\alpha 0\beta}=0 \, ,\quad f_{\alpha\beta\bar{\gamma}}- f_{\alpha\bar{\gamma}\beta}=2i\delta_{\beta\bar{\gamma}}f_{\alpha0}\, .
$$
Moreover, we define
$$
\vert\partial f\vert^2:=\sum_{\alpha=1}^n f_\alpha f_{\bar{\alpha}}= f_\alpha f_{\bar{\alpha}} \quad \text{and} \quad \Delta_{\mathbb{H}^n}f:=\sum_{\alpha=1}^n \left( f_{\alpha\bar{\alpha}}+ f_{\bar{\alpha}\alpha}\right)=f_{\alpha\bar{\alpha}}+ f_{\bar{\alpha}\alpha} \, .	
$$

\

We recall that the Heisenberg group is a strictly pseudoconvex CR manifold, where the CR structure is given by the bundle $\mathcal{H}$ spanned by the vector fields $Z_\alpha$, for $\alpha=1,\ldots,n$, and where the standard contact form on $\mathbb{H}^n$ is given by
$$
\Theta=dt+\sum_{\alpha=1}^niz^\alpha d\bar{z}^\alpha-i\bar{z}^\alpha dz^{\alpha}.
$$

We also recall that the Heisenberg group is a Carnot group, which can be viewed as a flat model in Sub-Riemannian geometry similar to the Euclidean space $\mathbb{R}^n$ in Riemannian geometry, where the family of vector fields $T=\frac{\partial}{\partial t}$, $X_\alpha=2\operatorname{Re}Z_\alpha$, $Y_\alpha=2i\operatorname{Im}Z_\alpha$, for $\alpha=1,\ldots,n$, form a base of the Lie algebra of vector fields on $\mathbb{H}^n$ which are left
invariant with respect to the group action $\circ$.

\

Given $u>0$ a solution of \eqref{heis_crit} we consider the auxiliary function $f$ defined as follows
\begin{equation}\label{def_f}
e^f=u^{\frac{1}{n}} \, ,
\end{equation}
then $f$ solves
\begin{equation}\label{eq_f}
-\Delta	_{\mathbb{H}^n} f= 2n \vert\partial f\vert^2 + 2n e^{2f}\quad \text{in } \mathbb{H}^n\, .
\end{equation}
We also introduce the function $g:\mathbb{H}^n\rightarrow\mathbb{C}$ such that
\begin{equation}\label{g}
g=\vert\partial f\vert^2 + e^{2f} -if_0 \, ,
\end{equation}
then the equation \eqref{eq_f} can be rewritten as
\begin{equation}\label{eq_g}
f_{\alpha\bar\alpha}=-ng\quad 	\text{in } \mathbb{H}^n\, .
\end{equation}
As done in \cite{JL} and in \cite{OuMa} we define the following tensors
\begin{equation}\label{tens}
\begin{array}{lll}
& D_{\alpha\beta}=f_{\alpha\beta}-2f_\alpha f_\beta & D_\alpha=D_{\alpha\beta}f_{\bar{\beta}} \\
& E_{\alpha\bar\beta}=f_{\alpha\bar\beta}-\frac{1}{n}f_{\gamma\bar\gamma}\delta_{\alpha\bar\beta} \quad & E_\alpha=E_{\alpha\bar\beta}f_{\beta} \\
&G_\alpha=if_{0\alpha}-if_0f_\alpha+ e^{2f}f_\alpha+\vert\partial f\vert^2 f_{\alpha}\, .
\end{array}
\end{equation}
The above tensors will be important in our argument and we refer to \cite{JL} for the reason to introduce them.
Moreover (see also \cite{OuMa}) we observe that
\begin{equation}\label{tensg}
\begin{array}{lll}
& E_{\alpha\bar\beta}=f_{\alpha\bar\beta}+g\delta_{\alpha\bar\beta}  & E_\alpha=f_{\alpha\bar\beta}f_\beta+gf_\alpha\\
&D_\alpha=f_{\alpha\beta}f_{\bar\beta}-2|\partial f|^2f_\alpha & G_\alpha=if_{0\alpha}+gf_\alpha\\
&\vert\partial f\vert^2_{\bar{\alpha}}=D_{\bar{\alpha}}+E_{\bar{\alpha}}+ \bar{g} f_{\bar{\alpha}}-2 f_{\bar{\alpha}}e^{2f} \\
&g_{\bar\alpha}=D_{\bar\alpha}+E_{\bar\alpha}+G_{\bar\alpha} & \bar g_\alpha=D_{\alpha}+E_{\alpha}+G_{\alpha}.
\end{array}
\end{equation}
We are now in a position to recall the following differential identity obtained in \cite[Formula (4.2)]{JL} and \cite[Proposition 2.1]{OuMa} (with $p=0$) which will be fundamental in our arguments.
\begin{proposition}\label{prop_JL} With the notations above, we have
$$
\mathcal{M}=\mathrm{Re}Z_{\bar\alpha}\left\lbrace e^{2(n-1)f}\left[ \left( g+3if_0\right) E_\alpha + \left( g-if_0\right)D_\alpha -3i f_0 G_\alpha\right] \right\rbrace\, ,
$$
where
\begin{multline*}
\mathcal{M}= e^{2nf}\left( \vert E_{\alpha\bar{\beta}}\vert^2 + \vert D_{\alpha\beta}\vert^2\right)   \\
+ e^{2(n-1)f}\left( \vert G_\alpha \vert^2 + \vert G_\alpha+ D_\alpha\vert^2 + \vert G_\alpha -E_\alpha\vert^2 + \vert D_{\alpha\beta}f_{\bar{\gamma}}+ E_{\alpha\bar{\gamma}}f_\beta\vert^2\right) \, .
\end{multline*}
\end{proposition}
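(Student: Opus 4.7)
The plan is to verify the identity by a direct computation: expanding $\mathrm{Re}\,Z_{\bar\alpha}\{\cdots\}$ with the Leibniz rule and reducing every resulting term, via the commutation relations and the equation $f_{\alpha\bar\alpha}=-ng$, to a bilinear expression in the five tensors $E_{\alpha\bar\beta}$, $D_{\alpha\beta}$, $E_\alpha$, $D_\alpha$, $G_\alpha$ (and their conjugates), which then gets regrouped into the six squared norms appearing in $\mathcal{M}$. The coefficients $g+3if_0$, $g-if_0$ and $-3if_0$ in front of $E_\alpha$, $D_\alpha$ and $G_\alpha$ should be treated as a priori unknowns; the thrust of the computation is that they are \emph{forced} on us if we want the cross terms from $|G_\alpha+D_\alpha|^2$ and $|G_\alpha-E_\alpha|^2$ to line up with what the divergence produces.

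First I would differentiate the outer exponential, which contributes a term
\[
2(n-1)\,e^{2(n-1)f}\,\mathrm{Re}\bigl\{ f_{\bar\alpha}\bigl[(g+3if_0)E_\alpha+(g-if_0)D_\alpha-3if_0 G_\alpha\bigr]\bigr\}.
\]
Using $f_{\bar\alpha}E_\alpha=E_{\alpha\bar\beta}f_\beta f_{\bar\alpha}+g|\partial f|^2$, and similar expansions for $f_{\bar\alpha}D_\alpha$ and $f_{\bar\alpha}G_\alpha$, these terms will ultimately combine with pieces coming from the next step. Next, I would compute $Z_{\bar\alpha}E_\alpha$, $Z_{\bar\alpha}D_\alpha$, $Z_{\bar\alpha}G_\alpha$ by differentiating the alternative expressions in \eqref{tensg}:
\[
E_\alpha=f_{\alpha\bar\beta}f_\beta+gf_\alpha,\qquad D_\alpha=f_{\alpha\beta}f_{\bar\beta}-2|\partial f|^2 f_\alpha,\qquad G_\alpha=if_{0\alpha}+gf_\alpha.
\]
The tangential third derivatives $f_{\alpha\bar\beta\bar\gamma}$, $f_{\alpha\beta\bar\gamma}$, $f_{0\alpha\bar\gamma}$ produced here are rewritten using the commutation rules $f_{\alpha\beta\bar\gamma}-f_{\alpha\bar\gamma\beta}=2i\delta_{\beta\bar\gamma}f_{\alpha0}$, $f_{\alpha\bar\beta}-f_{\bar\beta\alpha}=2i\delta_{\alpha\bar\beta}f_0$, etc.; contractions of the form $f_{\alpha\bar\alpha\bullet}$ are then replaced by derivatives of $-ng$ using \eqref{eq_g}, and the divergences $g_{\bar\alpha}$, $\bar g_\alpha$ are fed back in through the identities $g_{\bar\alpha}=D_{\bar\alpha}+E_{\bar\alpha}+G_{\bar\alpha}$ and $\bar g_\alpha=D_\alpha+E_\alpha+G_\alpha$ from \eqref{tensg}.

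After this reduction every term is a product of two of $\{E_{\alpha\bar\beta},D_{\alpha\beta},E_\alpha,D_\alpha,G_\alpha,f_\alpha\}$ with an appropriate exponential weight, and the bookkeeping splits along the weight: terms with $e^{2nf}$ collect into $|E_{\alpha\bar\beta}|^2+|D_{\alpha\beta}|^2$ (the ``top'' quadratic pieces, produced when a second derivative in the differentiation meets a second derivative already present in $E_\alpha$ or $D_\alpha$), while terms with $e^{2(n-1)f}$ must reassemble into the four squares $|G_\alpha|^2+|G_\alpha+D_\alpha|^2+|G_\alpha-E_\alpha|^2+|D_{\alpha\beta}f_{\bar\gamma}+E_{\alpha\bar\gamma}f_\beta|^2$. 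The Leibniz term proportional to $2(n-1)$ computed above supplies precisely the missing pieces so that the cross products complete these squares; this is the step that pins down the numerical coefficients $1,3,-1$ in $g\pm if_0$ and $-3if_0$.

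The main obstacle is the sheer algebraic bookkeeping: the middle step produces on the order of a few dozen terms, and one must track real and imaginary parts carefully (many terms are purely imaginary and drop out under $\mathrm{Re}$, which is essential for the identity to close). A practical organizing principle I would adopt is to separate, at each stage, the contributions that are symmetric under $\alpha\leftrightarrow\bar\alpha$ swap (contributing to the moduli squared) from those that are antisymmetric (and will cancel after taking the real part), and to write each differentiation in the form ``second-derivative times first-derivative'' so that the matching with the tensors in $\mathcal{M}$ becomes mechanical. Once the cancellations are verified, the identity follows, and since every summand of $\mathcal{M}$ is manifestly nonnegative, the proposition is established.
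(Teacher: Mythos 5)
The paper does not actually prove this proposition: it is quoted verbatim from Jerison--Lee \cite[Formula (4.2)]{JL} (see also \cite[Proposition 2.1]{OuMa} with $p=0$), so the ``paper's proof'' is a citation. Your strategy --- expand $\mathrm{Re}\,Z_{\bar\alpha}\{\cdots\}$ by Leibniz, reduce third derivatives via the commutation rules and $f_{\alpha\bar\alpha}=-ng$, feed the divergences back in through $g_{\bar\alpha}=D_{\bar\alpha}+E_{\bar\alpha}+G_{\bar\alpha}$, and regroup into the six squares --- is indeed the method by which the identity is established in those references, so the approach is the right one in principle.

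The gap is that you have described the computation rather than performed it, and for this particular statement the computation \emph{is} the entire mathematical content. This is the ``remarkable formula'' of Jerison and Lee: the fact that the several dozen cross terms assemble exactly into a sum of nonnegative squares, with no leftover indefinite terms, is a delicate cancellation that cannot be taken on faith from the shape of the answer (indeed a whole paper, \cite{Wang}, is devoted to explaining why it happens). A sketch asserting that ``the Leibniz term proportional to $2(n-1)$ supplies precisely the missing pieces'' is exactly the claim that needs verification, not an argument for it. Moreover, your one displayed sample reduction is already off: from \eqref{tensg} one has $f_{\bar\alpha}E_\alpha=E_{\alpha\bar\beta}f_\beta f_{\bar\alpha}=f_{\alpha\bar\beta}f_\beta f_{\bar\alpha}+g|\partial f|^2$, whereas you wrote $f_{\bar\alpha}E_\alpha=E_{\alpha\bar\beta}f_\beta f_{\bar\alpha}+g|\partial f|^2$, double-counting the $g|\partial f|^2$ term --- a small slip, but symptomatic of the fact that the bookkeeping has not been done. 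Also, the claim that the coefficients $g+3if_0$, $g-if_0$, $-3if_0$ are ``forced'' is too strong as stated: they are one specific choice (made in \cite{JL}) for which the completion of squares works, and treating them as unknowns to be solved for would require carrying out the full expansion anyway. To make the proposal into a proof you would either have to execute the complete term-by-term verification or, as the authors do, cite \cite{JL} or \cite{OuMa}.
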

From this proposition we obtain the next two lemmas: the first one in the case $n=1$ and the second one for $n\geq 2$.

\begin{lemma}\label{lemma0}
With the notations above, if $n=1$, then for every (real) non-negative cut-off function $\eta$ with compact support and for every $s>2$ and $\beta>0$ small enough we have
$$
\int_{\mathbb{H}^1} \mathcal{M}\Psi^{-\beta}\eta^s\leq  C \left(\int_{\text{supp}|\partial\eta|}\mathcal{M}\Psi^{-\beta}\eta^s\right)^{1/2}\left(\int_{\text{supp}|\partial\eta|} |g|^2\Psi^{-\beta}|\partial\eta|^2\eta^{s-2}\right)^{1/2}\,,
$$
for some $C>0$, where
$$
\Psi:=g\bar{g}\, e^{-2f}=\vert g\vert^2 \, e^{-2f}\, .
$$
In particular
$$
\int_{\mathbb{H}^1} \mathcal{M}\Psi^{-\beta}\eta^s\leq  C \int_{\mathbb{H}^1}\vert g\vert^2 \Psi^{-\beta} \vert\partial\eta\vert^2 \eta^{s-2}\,.
$$
\end{lemma}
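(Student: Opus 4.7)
The plan is to multiply the pointwise identity $\mathcal{M}=\mathrm{Re}\,Z_{\bar\alpha}(W_\alpha)$ of Proposition \ref{prop_JL}, where
\[
W_\alpha:=e^{2(n-1)f}\bigl[(g+3if_0)E_\alpha+(g-if_0)D_\alpha-3if_0\,G_\alpha\bigr],
\]
by the test function $\Psi^{-\beta}\eta^s$ and integrate by parts over $\mathbb{H}^1$. Since $\mathrm{Re}(g)=|\partial f|^2+e^{2f}\ge e^{2f}>0$, the weight $\Psi=|g|^2 e^{-2f}$ is everywhere positive, so $\Psi^{-\beta}$ is smooth and never singular. For $n=1$ two simplifications occur: the prefactor $e^{2(n-1)f}$ equals $1$, and $E_{\alpha\bar\beta}\equiv 0$, hence also $E_\alpha\equiv 0$, so that $W_1=(g-if_0)D_1-3if_0\,G_1$. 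Combining $|f_0|\le|g|$ with the elementary inequalities $|D_1|^2\le 2|G_1+D_1|^2+2|G_1|^2\le C\mathcal{M}$ and $|G_1|^2\le\mathcal{M}$ (both directly readable from the formula defining $\mathcal{M}$ with $n=1$) gives the pointwise bound $|W_1|\le C|g|\mathcal{M}^{1/2}$.

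The vector fields $Z_{\bar\alpha}$ have vanishing Euclidean divergence, so $Z_{\bar\alpha}^{\ast}=-Z_{\bar\alpha}$ with respect to Lebesgue measure. Integrating by parts and using the compact support of $\eta$ to drop the boundary contribution yields
\[
\int_{\mathbb{H}^1}\!\mathcal{M}\Psi^{-\beta}\eta^s=\beta\,\mathrm{Re}\!\int_{\mathbb{H}^1}\!W_\alpha\Psi^{-\beta-1}\Psi_{\bar\alpha}\eta^s-s\,\mathrm{Re}\!\int_{\mathbb{H}^1}\!W_\alpha\Psi^{-\beta}\eta^{s-1}\eta_{\bar\alpha}.
\]
The second term on the right-hand side already has the desired structure: the bound $|W_1|\le C|g|\mathcal{M}^{1/2}$, the equality $|\eta_{\bar 1}|=|\partial\eta|$ (valid for $n=1$ since $\eta$ is real), and the Cauchy--Schwarz inequality applied on $\mathrm{supp}|\partial\eta|$ immediately show that it is dominated by the right-hand side of the first displayed estimate of the lemma.

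The main obstacle is to control the first term and absorb it into the left-hand side for $\beta$ small. For this I would expand
\[
\Psi_{\bar\alpha}=e^{-2f}\bigl(g_{\bar\alpha}\,\bar g+g\,\overline{g_\alpha}\bigr)-2f_{\bar\alpha}\,\Psi,
\]
use $g_{\bar\alpha}=D_{\bar\alpha}+E_{\bar\alpha}+G_{\bar\alpha}$ from \eqref{tensg}, and prove the companion identity
\[
g_\alpha=D_\alpha+E_\alpha-G_\alpha+2g\,f_\alpha,
\]
which follows by differentiating $g=|\partial f|^2+e^{2f}-if_0$ directly, invoking the commutation rules of Section \ref{prel}, the definitions \eqref{tens}, and the elementary identities $f_{\alpha\beta}f_{\bar\beta}=D_\alpha+2f_\alpha|\partial f|^2$ and $f_{\alpha\bar\beta}f_\beta=E_\alpha-gf_\alpha$. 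For $n=1$ the terms $E_{\bar 1},E_1$ vanish, and the contribution $2|g|^2 f_{\bar 1}e^{-2f}$ produced by $2gf_\alpha$ cancels exactly with the piece $-2f_{\bar 1}\Psi$, leaving the clean expression
\[
\Psi_{\bar 1}=2e^{-2f}\bigl[(|\partial f|^2+e^{2f})D_{\bar 1}+if_0\,G_{\bar 1}\bigr].
\]
Together with $|\partial f|^2+e^{2f}\le|g|$ and $|f_0|\le|g|$ this yields $|\Psi_{\bar 1}|\le C|g|e^{-2f}\mathcal{M}^{1/2}$, and multiplication by the bound on $|W_1|$ gives the pointwise inequality $|W_\alpha\Psi_{\bar\alpha}|\,\Psi^{-1}\le C\mathcal{M}$. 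The first right-hand term is therefore controlled by $C\beta\int_{\mathbb{H}^1}\mathcal{M}\Psi^{-\beta}\eta^s$, and can be absorbed on the left for $\beta$ small enough. The first displayed estimate of the lemma follows, and the \emph{in particular} statement is then immediate: enlarging the domain of integration from $\mathrm{supp}|\partial\eta|$ to $\mathbb{H}^1$ in the first factor on the right and dividing both sides by $\bigl(\int_{\mathbb{H}^1}\mathcal{M}\Psi^{-\beta}\eta^s\bigr)^{1/2}$ absorbs it into the left-hand side.
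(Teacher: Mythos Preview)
Your proposal is correct and follows essentially the same route as the paper: multiply the Jerison--Lee identity by $\Psi^{-\beta}\eta^s$, integrate by parts, prove the two pointwise bounds $|W_\alpha|\le C|g|\,\mathcal{M}^{1/2}$ and $|\Psi_{\bar\alpha}|\le C|g|e^{-2f}\mathcal{M}^{1/2}$, absorb the $\beta$-term into the left, and apply Cauchy--Schwarz on $\mathrm{supp}|\partial\eta|$ for the remaining term. The only cosmetic difference is that you invoke $E_{1\bar 1}\equiv 0$ (hence $E_1\equiv 0$) at the outset to simplify $W_1$ and $\Psi_{\bar 1}$, whereas the paper keeps the $E_\alpha$ contributions and instead regroups everything as $(D_\alpha+G_\alpha)$, $(E_\alpha-G_\alpha)$, $G_\alpha$ to read the bounds directly off the definition of $\mathcal{M}$; both arrive at the same estimates.
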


\begin{proof}
We define
$$
\mathcal{I}_1:=\int_{\mathbb{H}^1} \mathcal{M}\Psi^{-\beta}\eta^s\,.
$$
From Proposition \ref{prop_JL} we obtain
\begin{align}\label{eq0}
\int_{\mathbb{H}^1} \mathcal{M}\Psi^{-\beta}\eta^s=&\int_{\mathbb{H}^1}  \mathrm{Re}Z_{\bar\alpha} \left[ \left( g+3if_0\right) E_\alpha + \left( g-if_0\right)D_\alpha -3i f_0 G_\alpha\right]  \Psi^{-\beta} \eta^s \nonumber \\
=& \beta \int_{\mathbb{H}^1}  \mathrm{Re}\left\lbrace\left[ \left( g+3if_0\right) E_\alpha + \left( g-if_0\right)D_\alpha -3i f_0 G_\alpha\right]\Psi_{\bar{\alpha}}\right\rbrace \Psi^{-\beta-1} \eta^s \nonumber  \\
&-s \int_{\mathbb{H}^1} \mathrm{Re}\left\lbrace \left[ \left( g+3if_0\right) E_\alpha + \left( g-if_0\right)D_\alpha -3i f_0 G_\alpha\right]\eta_{\bar{\alpha}} \right\rbrace \Psi^{-\beta} \eta^{s-1}
\end{align}
where we integrate by parts. We now observe that, on the one hand, from the definition of $g$ \eqref{g} we have
\begin{align*}
\left( g+3if_0\right) E_\alpha + \left( g-if_0\right)D_\alpha -3i f_0 G_\alpha = &\,  \left( D_\alpha+ G_\alpha\right) (g-if_0) + \left( E_\alpha - G_\alpha\right) (g+3if_0)+ if_0G_\alpha \\
=& \left( D_\alpha+ G_\alpha\right) \left( \vert\partial f\vert^2 + e^{2f} -2i f_0\right)\\
&\, + \left( E_\alpha - G_\alpha\right) \left( \vert\partial f\vert^2 + e^{2f} +2if_0\right)+ if_0G_\alpha    \, ,
\end{align*}
and so, from Cauchy-Schwarz inequality
\begin{align*}
\vert \left( g+3if_0\right) E_\alpha + \left( g-if_0\right)D_\alpha -3i f_0 G_\alpha \vert\leq & \left\vert D_\alpha+ G_\alpha\right\vert \sqrt{\vert\partial f\vert^4 + e^{4f}+2\vert\partial f\vert^2 e^{2f} +4 f_0^2}\\
&\, + \left\vert E_\alpha - G_\alpha\right\vert \sqrt{\vert\partial f\vert^4 + e^{4f}+2\vert\partial f\vert^2 e^{2f} +4 f_0^2}\\
&\,+ \vert f_0\vert\vert G_\alpha\vert    \\
 \leq &\,  2\vert g\vert \left( \left\vert D_\alpha+ G_\alpha\right\vert +\left\vert E_\alpha - G_\alpha\right\vert  + \vert G_\alpha\vert \right)\\
 \leq &\, 2\vert g\vert\sqrt{\mathcal{M}}
\end{align*}
where we used the fact that
\begin{equation}\label{mod_g}
\vert g\vert=\sqrt{\vert\partial f\vert^4 + e^{4f}+2\vert\partial f\vert^2 e^{2f} + f_0^2}\, .
\end{equation}
Summing up, we have obtained the following
\begin{equation}\label{p1}
\vert \left( g+3if_0\right) E_\alpha + \left( g-if_0\right)D_\alpha -3i f_0 G_\alpha\vert \leq  \,  2\vert g\vert\sqrt{\mathcal{M}}\, .
\end{equation}
On the other hand, from \eqref{tensg} we have
\begin{align*}
\Psi_{\bar{\alpha}}=&\left[ \left( g\bar{g}\right) e^{-2f}\right]_{\bar{\alpha}}=e^{-2f} \left( \bar{g}g_{\bar{\alpha}} + g \bar{g}_{\bar{\alpha}}\right) - 2\left( g\bar{g}\right)f_{\bar{\alpha}}e^{-2f} \\
=& \,  e^{-2f} \left[ \bar{g}\left( D_{\bar\alpha}+E_{\bar\alpha}+G_{\bar\alpha}\right) + g \left( D_{\bar\alpha}+E_{\bar\alpha}-G_{\bar\alpha}+2\bar{g}f_{\bar{\alpha}}\right)\right] -2\left( g\bar{g}\right) f_{\bar{\alpha}}e^{-2f} \\
=\, & e^{-2f} \left[ D_{\bar{\alpha}} \left( g + \bar{g}\right) + E_{\bar{\alpha}}\left( g + \bar{g}\right) + G_{\bar{\alpha}}\left(\bar{g}-g \right) \right] \\
=&\,   e^{-2f} \left[ \left( D_{\bar{\alpha}} + G_{\bar{\alpha}}\right) \left( g + \bar{g}\right) + \left( E_{\bar{\alpha}}-G_{\bar{\alpha}}\right) \left( g + \bar{g}\right) + G_{\bar{\alpha}}\left(\bar{g}-g \right) \right] \\
=& \,   2 e^{-2f} \left[ \left( D_{\bar{\alpha}} + G_{\bar{\alpha}}\right) \left( \vert\partial f\vert^2 + e^{2f} \right) + \left( E_{\bar{\alpha}}-G_{\bar{\alpha}}\right) \left( \vert\partial f\vert^2 + e^{2f} \right) + i G_{\bar{\alpha}}f_0 \right] \, ,
\end{align*}
where we used the fact that
\begin{equation}\label{gbarra}
\bar{g}_{\bar{\alpha}}= D_{\bar\alpha}+E_{\bar\alpha}-G_{\bar\alpha}+2\bar{g}f_{\bar{\alpha}} \, .
\end{equation}
Indeed by \eqref{tensg} we have
$$
G_{\bar{\alpha}}=-if_{0\bar{\alpha}}+\bar{g}f_{\bar{\alpha}}\,,
$$
hence
\begin{align*}
  \bar{g}_{\bar{\alpha}}= & \left(|\partial f|^2+e^{2f}+if_0\right)_{\bar{\alpha}} \\
   =& D_{\bar{\alpha}}+E_{\bar{\alpha}}+ \bar{g} f_{\bar{\alpha}}+if_{0\bar{\alpha}}\\
   =& D_{\bar\alpha}+E_{\bar\alpha}-G_{\bar\alpha}+2\bar{g}f_{\bar{\alpha}} \, .
\end{align*}
Moreover,  from Cauchy-Schwarz inequality
\begin{align*}
\vert\Psi_{\bar{\alpha}}\vert\leq & \,   2 e^{-2f} \left[ \left\vert D_{\bar{\alpha}} + G_{\bar{\alpha}}\right\vert \sqrt{\vert\partial f\vert^4 + e^{4f}+2\vert\partial f\vert^2 e^{2f}} \right.\\
&\, \left.+ \left\vert E_{\bar{\alpha}}-G_{\bar{\alpha}}\right\vert \sqrt{\vert\partial f\vert^4 + e^{4f}+2\vert\partial f\vert^2 e^{2f}} +  \vert G_{\bar{\alpha}}\vert \vert f_0\vert \right] \\
\leq & 2 e^{-2f} \vert g\vert  \left[ \vert D_{\bar{\alpha}} + G_{\bar{\alpha}}\vert + \vert E_{\bar{\alpha}}-G_{\bar{\alpha}}\vert + \vert G_{\bar{\alpha}}\vert  \right]  \\
\leq & \,  2 e^{-2f}\vert g\vert\sqrt{\mathcal{M}}\, ,
\end{align*}
i.e.
\begin{equation}\label{p2}
\vert \Psi_{\bar{\alpha}}\vert \leq  \,  2 e^{-2f}\vert g\vert\sqrt{\mathcal{M}}\, .
\end{equation}
Hence, by substituting \eqref{p1} and \eqref{p2} in \eqref{eq0} we get, after a Cauchy-Schwarz inequality,
\begin{align*}
\mathcal{I}_1= \left\vert \int_{\mathbb{H}^1} \mathcal{M}\Psi^{-\beta}\eta^s\right\vert \leq & 4\beta \int_{\mathbb{H}^1} \mathcal{M}\vert g\vert^2 e^{-2f}    \Psi^{-\beta-1} \eta^s \nonumber + 2s \int_{\mathbb{H}^1} \vert g\vert \sqrt{\mathcal{M}}  \Psi^{-\beta}\vert\partial \eta\vert \eta^{s-1}  \\
\leq & \,  4\beta \int_{\mathbb{H}^1} \mathcal{M}   \Psi^{-\beta} \eta^s + 2s \int_{\mathbb{H}^1} \sqrt{\mathcal{M}} \vert g\vert\Psi^{-\beta}\vert\partial \eta\vert \eta^{s-1}\,.
\end{align*}
Choosing $\beta>0$ small enough we find
\begin{equation*}
  \mathcal{I}_1\leq C\int_{\mathbb{H}^1} \sqrt{\mathcal{M}} \vert g\vert\Psi^{-\beta}\vert\partial \eta\vert \eta^{s-1}\,.
\end{equation*}
We now use H\"{o}lder's inequality and we have
$$
\mathcal{I}_1\leq C \left(\int_{\mathrm{supp}|\partial\eta|} \mathcal{M}\Psi^{-\beta}\eta^s\right)^\frac{1}{2}\left( \int_{\mathrm{supp}|\partial\eta|} \vert g\vert^2 \Psi^{-\beta}\vert\partial\eta\vert^2\eta^{s-2} \right)^\frac{1}{2} \, ,
$$
for some $C>0$. Now the conclusion easily follows.
\end{proof}

\begin{lemma}\label{lemma1}
With the notations above, for every (real) non-negative cut-off function $\eta$ with compact support and for every $s>2$ we have
$$
\int_{\mathbb{H}^n} \mathcal{M}\eta^s\leq  C \left(\int_{\text{supp}|\partial\eta|}\mathcal{M}\eta^s\right)^{1/2}\left(\int_{\text{supp}|\partial\eta|} e^{2(n-1)f}|g|^2|\partial\eta|^2\eta^{s-2}\right)^{1/2}\,.
$$
In particular we also have
$$
\int_{\mathbb{H}^n} \mathcal{M}\eta^s\leq  C \int_{\mathbb{H}^n}\vert g\vert^2 e^{2(n-1)f} \vert\partial\eta\vert^2 \eta^{s-2}\, .
$$
\end{lemma}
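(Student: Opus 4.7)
The plan is to mimic the proof of the previous lemma, with the crucial simplification that, for $n\geq 2$, the exponential factor $e^{2(n-1)f}$ already present in the vector field of Proposition \ref{prop_JL} plays the role that was played, in the $n=1$ case, by the regularizing weight $\Psi^{-\beta}$. Consequently no such extra weight and no smallness parameter $\beta$ will be needed: after integration by parts only the boundary-type term coming from the derivative of $\eta^s$ appears.

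Concretely, I would write Proposition \ref{prop_JL} as $\mathcal{M}=\mathrm{Re}\,Z_{\bar\alpha}V_\alpha$, where
$$
V_\alpha:=e^{2(n-1)f}\bigl[(g+3if_0)E_\alpha+(g-if_0)D_\alpha-3if_0G_\alpha\bigr],
$$
multiply by $\eta^s$ and integrate over $\mathbb{H}^n$. Since $Z_{\bar\alpha}$ is skew-adjoint with respect to Lebesgue measure on $\mathbb{H}^n$ and $\eta$ has compact support, integration by parts yields
$$
\int_{\mathbb{H}^n}\mathcal{M}\eta^s=-s\int_{\mathbb{H}^n}\mathrm{Re}\{V_\alpha\eta_{\bar\alpha}\}\,\eta^{s-1}.
$$
Next, the rewriting
$$
(g+3if_0)E_\alpha+(g-if_0)D_\alpha-3if_0G_\alpha=(D_\alpha+G_\alpha)(g-if_0)+(E_\alpha-G_\alpha)(g+3if_0)+if_0G_\alpha,
$$
together with the elementary bounds $|g-if_0|,\,|g+3if_0|\leq 2|g|$ and $|f_0|\leq|g|$ coming from \eqref{mod_g}, and the inequality
$$
e^{2(n-1)f}\bigl(|D_\alpha+G_\alpha|^2+|E_\alpha-G_\alpha|^2+|G_\alpha|^2\bigr)\leq\mathcal{M},
$$
which is immediate from the definition of $\mathcal{M}$, produces the key pointwise estimate $|V_\alpha|\leq C e^{(n-1)f}|g|\sqrt{\mathcal{M}}$.

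Substituting this bound in the identity above and applying the Cauchy-Schwarz inequality on $\mathrm{supp}|\partial\eta|$ gives the first claimed estimate. The second then follows by Young's inequality, which allows one to absorb half of $\int_{\mathbb{H}^n}\mathcal{M}\eta^s$ into the left-hand side. I do not anticipate any serious obstacle: the argument is essentially the one of Lemma \ref{lemma0}, simplified by the absence of the weight $\Psi^{-\beta}$. The only point requiring attention is the correct bookkeeping of the exponential $e^{(n-1)f}$ arising from the pointwise bound on $|V_\alpha|$, which is precisely what produces the weight $e^{2(n-1)f}$ on the right-hand side after Cauchy-Schwarz.
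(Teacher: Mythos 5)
Your proposal is correct and follows essentially the same route as the paper: integrate by parts against $\eta^s$, bound the vector field pointwise by $C\,e^{(n-1)f}|g|\sqrt{\mathcal{M}}$ using \eqref{mod_g} and the fact that $\mathcal{M}$ dominates $e^{2(n-1)f}\bigl(|G_\alpha|^2+|G_\alpha+D_\alpha|^2+|G_\alpha-E_\alpha|^2\bigr)$, then apply Cauchy--Schwarz and absorb via Young. The only cosmetic difference is that you reuse the regrouping $(D_\alpha+G_\alpha)(g-if_0)+(E_\alpha-G_\alpha)(g+3if_0)+if_0G_\alpha$ from Lemma \ref{lemma0}, whereas the paper estimates $|E_\alpha|,|D_\alpha|,|G_\alpha|$ directly; both yield the same bound.
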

\begin{proof} From Proposition \ref{prop_JL},  integrating by parts, using Cauchy-Schwarz and Young inequalities we obtain
\begin{align*}
&\int_{\H^n}\mathcal{M}\eta^s=\int_{\H^n}\mathrm{Re}Z_{\bar\alpha}\left\lbrace e^{2(n-1)f}\left[ \left( g+3if_0\right) E_\alpha + \left( g-if_0\right)D_\alpha -3i f_0 G_\alpha\right] \right\rbrace \eta^s\\
&=-s\int_{\H^n}\mathrm{Re}\left\lbrace e^{2(n-1)f}\left[ \left( g+3if_0\right) E_\alpha + \left( g-if_0\right)D_\alpha -3i f_0 G_\alpha\right] \eta_{\bar\alpha}\right\rbrace \eta^{s-1}\\
&=- s\int_{\H^n}\mathrm{Re}\left\lbrace e^{2(n-1)f}\left[ \left( \vert\partial f\vert^2 + e^{2f} +2if_0\right) E_\alpha + \left( \vert\partial f\vert^2 + e^{2f} -2if_0\right)D_\alpha -3i f_0 G_\alpha\right] \eta_{\bar{\alpha}}\right\rbrace \eta^{s-1}\\
&\leq s\int_{\H^n} e^{2(n-1)f}\left[ \left( \vert\partial f\vert^2 + e^{2f} +2|f_0|\right) |E_\alpha| + \left( \vert\partial f\vert^2 + e^{2f} +2|f_0|\right)|D_\alpha| +3 |f_0| |G_\alpha|\right] |\partial\eta|\eta^{s-1}\\
&\leq C\left( \int_{\mathrm{supp}|\partial\eta|} e^{2(n-1)f}\left[ |E_\alpha|^2+|D_\alpha|^2 +|G_\alpha|^2\right]\eta^s\right)^\frac{1}{2}\\
&\qquad\qquad \left(\int_{\mathrm{supp}|\partial\eta|} e^{2(n-1)f}\left[|\partial f|^4+e^{4f}+|f_0|^2\right]|\partial\eta|^2\eta^{s-2}\right)^\frac{1}{2}\\
&\leq C\left( \int_{\mathrm{supp}|\partial\eta|} e^{2(n-1)f}\left[ |G_\alpha-E_\alpha|^2+|G_\alpha+D_\alpha|^2 +|G_\alpha|^2\right]\eta^s\right)^\frac{1}{2}\\
&\qquad\qquad  \left( \int_{\mathrm{supp}|\partial\eta|} e^{2(n-1)f}|g|^2|\partial\eta|^2\eta^{s-2}\right)^\frac{1}{2}\\
&\leq C\left( \int_{\mathrm{supp}|\partial\eta|} \mathcal{M}\eta^s\right)^\frac{1}{2}\left(\int_{\mathrm{supp}|\partial\eta|} e^{2(n-1)f}|g|^2|\partial\eta|^2\eta^{s-2}\right)^\frac{1}{2}
\end{align*}
which immediately yields the conclusion.
\end{proof}

We conclude this section by recalling the following lower bound for positive superhamonic functions in $\H^n$ (see \cite{bir}).
\begin{proposition}\label{stimau_basso}
Let $u$ be a positive superharmonic function in $\H^n$, i.e $u\in C^2(\H^n)$ and
$$\Delta_{\H^n}u\leq 0\quad\text{in }\H^n.$$
Then, there exists a constant $C>0$ such that
$$
u(\xi) \geq \frac{C}{|\xi|^{Q-2}},
$$
for any $\xi\in\H^n$ with $|\xi|>1$, where $Q=2n+2$.
\end{proposition}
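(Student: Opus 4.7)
The plan is to compare $u$ with the fundamental solution of the sub-Laplacian on $\H^n$ via a Bony-type weak maximum principle. Recall that the function $\Gamma(\xi) := |\xi|^{-(Q-2)}$ is, up to a positive multiplicative constant, the fundamental solution of $-\Delta_{\H^n}$ centered at the origin, and in particular it is $\Delta_{\H^n}$-harmonic on $\H^n \setminus \{0\}$. Since $u$ is continuous and strictly positive on the compact set $\{|\xi|=1\}$, we can set
$$m := \min_{|\xi|=1} u(\xi) > 0,$$
and our goal will be to prove the pointwise bound $u(\xi) \geq m\,|\xi|^{-(Q-2)}$ for $|\xi|>1$.

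For each $R>1$ I would introduce the annulus $A_R := \{\xi \in \H^n : 1 < |\xi| < R\}$ and define the barrier
$$w_R(\xi) := \frac{m}{1 - R^{-(Q-2)}}\left(|\xi|^{-(Q-2)} - R^{-(Q-2)}\right).$$
By construction, $\Delta_{\H^n} w_R = 0$ in $A_R$, and on the boundary one has $w_R = m$ on $\{|\xi|=1\}$ and $w_R = 0$ on $\{|\xi|=R\}$. Hence the difference $v := u - w_R$ satisfies $-\Delta_{\H^n} v \geq 0$ in $A_R$, with $v \geq 0$ on $\partial A_R$: indeed $u \geq m = w_R$ on the inner sphere by the choice of $m$, and $u > 0 = w_R$ on the outer sphere by positivity. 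Applying the weak maximum principle for the sub-Laplacian in the bounded domain $A_R$ (Bony \cite{Bony}) yields $u \geq w_R$ pointwise in $A_R$. Fixing any $\xi$ with $|\xi|>1$ and letting $R \to \infty$ gives
$$u(\xi) \geq m\,|\xi|^{-(Q-2)},$$
which is the desired estimate with $C = m$.

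The main delicate point is the applicability of the weak maximum principle on the annular region $A_R$, whose boundary $\{|\xi|=1\} \cup \{|\xi|=R\}$ carries characteristic points with respect to the horizontal distribution, so that classical (Riemannian) boundary regularity arguments are not directly available. This is, however, exactly the setting covered by Bony's maximum principle for H\"ormander-type operators, which only requires boundedness of the domain and the sign conditions on $-\Delta_{\H^n} v$ and on the boundary trace of $v$, and not smoothness or non-characteristic boundary. Once this principle is invoked, the rest of the argument is purely algebraic: the barrier $w_R$ is explicit, the comparison holds in every annulus, and the monotone passage $R \to \infty$ is trivial since $w_R(\xi) \nearrow m\,|\xi|^{-(Q-2)}$ for each fixed $\xi$.
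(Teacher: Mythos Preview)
Your argument is correct. The paper does not supply its own proof of this proposition; it simply quotes the result from \cite{bir}, so there is no in-paper argument to compare against. The comparison with the fundamental solution $|\xi|^{-(Q-2)}$ on Kor\'anyi annuli, closed via the weak maximum principle for H\"ormander operators, is the standard route and has no gaps---your remark about characteristic boundary points is well placed, and Bony's principle indeed requires no non-characteristic hypothesis on $\partial A_R$.
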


\

\section{Proof of Theorem \ref{teo1}}\label{proof_teo1}

In this section $n=1$ and given $R>0$, we choose a real cut-off function $\eta$ such that $\eta\equiv 1$ in $B_{R/2}$, $\eta\equiv 0$ in $B_R^c$ and $\vert \partial\eta\vert\leq \frac{c}{R}$ in $B_R\setminus B_{R/2}$. Let $s>6$ and $\beta>0$ small enough.

From Lemma \ref{lemma0} we have
\begin{align}\label{eq13}\nonumber
\mathcal{I}_1=\int_{\mathbb{H}^1} \mathcal{M}\Psi^{-\beta}\eta^s\leq & C \int_{\mathbb{H}^1}\vert g\vert^2\Psi^{-\beta} \vert\partial\eta\vert^2 \eta^{s-2} \\
\leq& \, \dfrac{C}{R^2} \int_{B_R\setminus B_{R/2}} \vert g\vert^2\Psi^{-\beta} \eta^{s-2} \\ \nonumber
\leq &  \, \dfrac{C}{R^2} \int_{B_R} \left(g\bar{g}\right)\Psi^{-\beta}\eta^{s-2} \\ \nonumber
=& -\dfrac{C}{R^2} \int_{B_R} \mathrm{Re} (f_{\alpha\bar{\alpha}}\bar{g}) \Psi^{-\beta} \eta^{s-2}\\\label{eqifinal}
=&:\dfrac{C}{R^2} \mathcal{I}_2\, ,
\end{align}
where we used \eqref{eq_g}.  By integrating by parts and using \eqref{gbarra} we get
\begin{align*}
\mathcal{I}_2=&- \int_{B_R} \mathrm{Re} (f_{\alpha\bar{\alpha}}\bar{g}) \Psi^{-\beta} \eta^{s-2}\\
=& \int_{B_R} \mathrm{Re} (f_\alpha \bar{g}_{\bar\alpha})   \Psi^{-\beta} \eta^{s-2} -\beta  \int_{B_R} \mathrm{Re} (f_\alpha \Psi_{\bar{\alpha}}\bar{g})\Psi^{-\beta-1}\eta^{s-2}\\
& + (s-2) \int_{B_R} \mathrm{Re} (f_\alpha\bar{g}\eta_{\bar{\alpha}})\Psi^{-\beta}\eta^{s-3} \\
=& \,  \int_{B_R} \mathrm{Re} (f_\alpha \left( D_{\bar\alpha}+E_{\bar\alpha}-G_{\bar\alpha}+2\bar{g}f_{\bar{\alpha}}\right))   \Psi^{-\beta} \eta^{s-2} -\beta  \int_{B_R} \mathrm{Re} (f_\alpha \Psi_{\bar{\alpha}}\bar{g})\Psi^{-\beta-1}\eta^{s-2}\\
& + (s-2) \int_{B_R} \mathrm{Re} (f_\alpha\bar{g}\eta_{\bar{\alpha}})\Psi^{-\beta}\eta^{s-3} \\
=& \int_{B_R} \mathrm{Re} (f_\alpha \left( D_{\bar\alpha}+E_{\bar\alpha}-G_{\bar\alpha}\right))   \Psi^{-\beta} \eta^{s-2} + 2 \int_{B_R} \mathrm{Re}(\bar{g})\vert\partial f\vert^2 \Psi^{-\beta}\eta^{s-2} \\
&-\beta  \int_{B_R} \mathrm{Re} (f_\alpha \Psi_{\bar{\alpha}}\bar{g})\Psi^{-\beta-1}\eta^{s-2} + C \int_{B_R} \vert\partial f\vert \vert g\vert \vert \partial\eta\vert\Psi^{-\beta}\eta^{s-3}.
\end{align*}
Since
\begin{equation}\label{stimaEFG}
\vert D_{\bar\alpha}+E_{\bar\alpha}-G_{\bar\alpha}\vert =\vert  (D_{\bar\alpha}+G_{\bar\alpha})+(E_{\bar\alpha}-G_{\bar\alpha})-G_{\bar\alpha}\vert\leq \sqrt\mathcal{M},
\end{equation}
for every $\theta>0$, using \eqref{p2}, we obtain
\begin{align*}
\mathcal{I}_2\leq&\theta \int_{B_R} \mathcal M\Psi^{-\beta}\eta^s+\frac{1}{4\theta}\int_{B_R} |\partial f|^2\Psi^{-\beta}\eta^{s-4}+2\int_{B_R}\left(|\partial f|^4+e^{2f}|\partial f|^2\right)\Psi^{-\beta}\eta^{s-2}\\
&+2\beta  \int_{B_R} |\partial f|\sqrt\mathcal{M}\Psi^{-\beta}\eta^{s-2}+ \frac{C_\eps}{R^2} \int_{B_R} |\partial f|^2\Psi^{-\beta}\eta^{s-4}+\eps\mathcal{I}_2\\
\leq&\left(\theta+C\beta R^2\right) \int_{B_R} \mathcal M\Psi^{-\beta}\eta^s+C\left(\frac{1}{\theta}+\frac{1}{R^2}+\frac{\beta}{R^2}\right)\int_{B_R} |\partial f|^2\Psi^{-\beta}\eta^{s-4}\\
&+4\int_{B_R}|\partial f|^4\Psi^{-\beta}\eta^{s-2}+\frac12 \int_{B_R}e^{4f}\Psi^{-\beta}\eta^{s-2}+\eps\mathcal{I}_2,
\end{align*}
i.e., by choosing $\eps$ small enough,
\begin{align*}
\mathcal{I}_2\leq&C\left(\theta+\beta R^2\right) \int_{B_R} \mathcal M\Psi^{-\beta}\eta^s+C\left(\frac{1}{\theta}+\frac{1}{R^2}+\frac{\beta}{R^2}\right)\int_{B_R} |\partial f|^2\Psi^{-\beta}\eta^{s-4}\\
&+C\int_{B_R}|\partial f|^4\Psi^{-\beta}\eta^{s-2}+\frac23 \int_{B_R}e^{4f}\Psi^{-\beta}\eta^{s-2}.
\end{align*}
Since
$$
\mathcal{I}_2 = \int_{B_R}\left(|\partial f|^4+e^{4f}+2e^{2f}|\partial f|^2+f_0^2\right)\Psi^{-\beta}\eta^{s-2},
$$
we obtain
\begin{align}\label{eqi2}\nonumber
\mathcal{I}_2\leq&C(\eps+\beta) R^2 \int_{B_R} \mathcal M\Psi^{-\beta}\eta^s+\frac{C(\eps^{-1}+\beta)}{R^2}\int_{B_R} |\partial f|^2\Psi^{-\beta}\eta^{s-4}+C\int_{B_R}|\partial f|^4\Psi^{-\beta}\eta^{s-2}\\
=:&C(\eps+\beta) R^2 \mathcal{I}_1+\frac{C(\eps^{-1}+\beta)}{R^2}\mathcal{I}_3+C\mathcal{I}_4,
\end{align}
where we chose $\theta=\eps R^2$.

Now, since
\begin{equation}\label{trick}
\Psi^{-\beta}= \vert g\vert^{-2\beta} e^{2\beta f} \leq e^{2\beta f}\left( \vert\partial f\vert^4 + e^{4f}\right)^{-\beta} \leq e^{-2\beta f}
\end{equation}
where we used \eqref{mod_g}, then using once again \eqref{eq_g} and integration by parts
\begin{align}\label{eqii3}
\mathcal{I}_3=& \, \int_{B_R}  |\partial f|^2\Psi^{-\beta}\eta^{s-4}\nonumber \\
\leq & \,  \int_{B_R} |\partial f|^2 e^{-2\beta f} \eta^{s-4} =:  \, \mathcal{I}_3' \\ \label{eqi3}
\leq & \,   \int_{B_R} \mathrm{Re}(g)e^{-2\beta f} \eta^{s-4}\nonumber \\
=& \, - \int_{B_R} \mathrm{Re}(f_{\alpha\bar{\alpha}})e^{-2\beta f} \eta^{s-4}\nonumber \\
=& \,  -2\beta \int_{B_R} \vert\partial f\vert^2 e^{-2\beta f}\eta^{s-4} + (s-4)\int_{B_R} \mathrm{Re}(f_\alpha\eta_{\bar{\alpha}})e^{-2\beta f} \eta^{s-5}\nonumber\\
\leq & \,  -2\beta\mathcal{I}_3' + \varepsilon'\mathcal{I}_3' + \frac{C_{\varepsilon'}}{R^2}\int_{B_R} e^{-2\beta f}
\end{align}
for every $\varepsilon'>0$ and for some $C_{\varepsilon'}>0$, where we used the following Young inequality
$$
\mathrm{Re}(f_\alpha\eta_{\bar{\alpha}})e^{-2\beta f} \eta^{s-5}\leq \vert\partial f\vert\vert\partial\eta\vert e^{-2\beta f} \eta^{s-5}\leq \varepsilon' \vert\partial f\vert^2 e^{-2\beta f} \eta^{s-4} + C_{\varepsilon'} \vert\partial\eta\vert^2 e^{-2\beta f}\eta^{s-6}  \, .
$$
Now, we observe that from \eqref{def_f} and from Proposition \ref{stimau_basso} we obtain
\begin{equation}\label{stima_f}
e^{-2\beta f}= u^{-2\beta} \leq C R^{4\beta},
\end{equation}
for $R$ large enough. Hence, by choosing $\varepsilon'=2\beta$ in \eqref{eqi3} we find
\begin{equation}\label{eqi8}
\mathcal{I}_3\leq\mathcal{I}_3' \leq C R ^{2+4\beta}\, ,
\end{equation}
where we used \eqref{stima_f}, \eqref{eqii3} and \eqref{volumi}.

We argue in the same way in order to estimate $\mathcal{I}_4$; from 	\eqref{trick}, \eqref{g}, \eqref{eq_g} and integration by parts we get
\begin{align}\label{eqi7}
\mathcal{I}_4=& \int_{B_R}|\partial f|^4\Psi^{-\beta}\eta^{s-2}\nonumber\\
\leq& \, \int_{B_R} | \partial f|^4 e^{-2\beta f} \eta^{s-2}=: \mathcal{I}_4' \\
\leq & \,  \int_{B_R} | \partial f|^2\mathrm{Re}(g) e^{-2\beta f}\eta^{s-2} \nonumber\\
=& \,  -\int_{B_R} | \partial f|^2\mathrm{Re}(f_{\alpha\bar{\alpha}}) e^{-2\beta f}\eta^{s-2} \nonumber\\
=& \int_{B_R} \mathrm{Re}(f_\alpha\vert\partial f\vert^2_{\bar{\alpha}}) e^{-2\beta f}\eta^{s-2} - 2\beta \int_{B_R} \vert\partial f\vert^4 e^{-2\beta f}\eta^{s-2}\nonumber \\
&+ (s-2) \int_{B_R} \mathrm{Re}(f_\alpha\eta_{\bar{\alpha}})\vert\partial f\vert^2 e^{-2\beta f}\eta^{s-3} \nonumber\\
=& \int_{B_R} \mathrm{Re}(f_\alpha (D_{\bar{\alpha}}+ E_{\bar{\alpha}}) ) e^{-2\beta f}\eta^{s-2} + \int_{B_R}\vert\partial f\vert^2 \mathrm{Re}(\bar{g}) e^{-2\beta f}\eta^{s-2} -2 \int_{B_R}  \vert\partial f\vert^2 e^{2(1-\beta) f}\eta^{s-2}\nonumber \\
& - 2\beta \int_{B_R} \vert\partial f\vert^4 e^{-2\beta f}\eta^{s-2} + (s-2) \int_{B_R} \mathrm{Re}(f_\alpha\eta_{\bar{\alpha}})\vert\partial f\vert^2 e^{-2\beta f}\eta^{s-3}\nonumber \, ,
\end{align}
where we used \eqref{tensg}.  Summing up
\begin{align*}
\mathcal{I'}_4 \leq &  \int_{B_R} \mathrm{Re}(f_\alpha (D_{\bar{\alpha}}+ E_{\bar{\alpha}}) ) e^{-2\beta f}\eta^{s-2} + \int_{B_R}\vert\partial f\vert^4  e^{-2\beta f}\eta^{s-2} - \int_{B_R}  \vert\partial f\vert^2 e^{2(1-\beta) f}\eta^{s-2} \\
& - 2\beta \int_{B_R} \vert\partial f\vert^4 e^{-2\beta f}\eta^{s-2} + (s-2) \int_{B_R} \mathrm{Re}(f_\alpha\eta_{\bar{\alpha}})\vert\partial f\vert^2 e^{-2\beta f}\eta^{s-3} \\
=& \int_{B_R} \mathrm{Re}(f_\alpha (D_{\bar{\alpha}}+ E_{\bar{\alpha}}) ) e^{-2\beta f}\eta^{s-2} +\mathcal{I}_4' - \int_{B_R}  \vert\partial f\vert^2 e^{2(1-\beta) f}\eta^{s-2} \\
& - 2\beta \mathcal{I}_4' + (s-2) \int_{B_R} \mathrm{Re}(f_\alpha\eta_{\bar{\alpha}})\vert\partial f\vert^2 e^{-2\beta f}\eta^{s-3}
\end{align*}
i.e.
\begin{align}\label{eqi4}
2\beta \mathcal{I'}_4 \leq &   \int_{B_R} \mathrm{Re}(f_\alpha (D_{\bar{\alpha}}+ E_{\bar{\alpha}}) ) e^{-2\beta f}\eta^{s-2} - \int_{B_R}  \vert\partial f\vert^2 e^{2(1-\beta) f}\eta^{s-2}\nonumber \\
& + (s-2) \int_{B_R} \mathrm{Re}(f_\alpha\eta_{\bar{\alpha}})\vert\partial f\vert^2 e^{-2\beta f}\eta^{s-3} \, .
\end{align}
Now, we tackle the first integral on the right-hand side of the \eqref{eqi4}
\begin{align*}
 \int_{B_R} \mathrm{Re}(f_\alpha (D_{\bar{\alpha}}+ E_{\bar{\alpha}}) ) e^{-2\beta f}\eta^{s-2} \leq & \int_{B_R} \sqrt{\mathcal{M}} \vert\partial f\vert  e^{-2\beta f}\eta^{s-2} \\
 = & \,  \int_{B_R} \sqrt{\mathcal{M}} \vert\partial f\vert \vert g\vert^{2\beta}  e^{-4\beta f}\Psi^{-\beta} \eta^{s-2}
\end{align*}
where we used \eqref{stimaEFG}. By using Young inequality,
\begin{equation*}
 \int_{B_R} \mathrm{Re}(f_\alpha (D_{\bar{\alpha}}+ E_{\bar{\alpha}}) ) e^{-2\beta f}\eta^{s-2} \leq  \varepsilon R^2\mathcal{I}_1 + \frac{C}{R^2}\int_{B_R}  \vert\partial f\vert^2 \vert g\vert^{4\beta}  e^{-8\beta f}\Psi^{-\beta} \eta^{s-4}
\end{equation*}
for every $\varepsilon>0$ and for some $C>0$. From \eqref{stima_f} we obtain
\begin{align*}
 \int_{B_R} \mathrm{Re}(f_\alpha (D_{\bar{\alpha}}+ E_{\bar{\alpha}}) ) e^{-2\beta f}\eta^{s-2} \leq &  \varepsilon R^2\mathcal{I}_1 + \frac{C}{R^{2}}\mathcal{I}_2+\frac{C}{R^{2}}\int_{B_R} \vert\partial f\vert^{\frac{2}{1-2\beta}}e^{-\frac{8\beta f}{1-2\beta}}\Psi^{-\beta}\eta^{s-\frac{4-4\beta}{1-2\beta}}   \, ,
\end{align*}
where we used the following Young inequality
$$
 \vert\partial f\vert^2 \vert g\vert^{4\beta}e^{-8\beta f}  \eta^{s-4} \leq 2\beta \vert g\vert^2 \eta^{s-2} + (1-2\beta) \vert\partial f\vert^{\frac{2}{1-2\beta}}e^{-\frac{8\beta f}{1-2\beta}}\eta^{s-\frac{4-4\beta}{1-2\beta}}\, .
$$
Hence,
\begin{align*}
 \int_{B_R} \mathrm{Re}(f_\alpha (D_{\bar{\alpha}}+ E_{\bar{\alpha}}) ) e^{-2\beta f}\eta^{s-2}  \leq & \, \varepsilon R^2\mathcal{I}_1 + \frac{C}{R^{2}}\mathcal{I}_2+\frac{C}{R^{2}}\int_{B_R} \vert\partial f\vert^{\frac{2}{1-2\beta}}e^{-\frac{8\beta f}{1-2\beta}}\Psi^{-\beta}\eta^{s-\frac{4-4\beta}{1-2\beta}}  \\
 \leq & \, \varepsilon R^2\mathcal{I}_1 + \frac{C}{R^{2}}\mathcal{I}_2+\frac{C}{R^{2}}\mathcal{I}_3  +
 \frac{C}{R^{2-36\beta}}\mathcal{I}_4+\frac{C}{R^2}\int_{B_R}\eta^{s-\frac{6-8\beta}{1-4\beta}}\, ,
\end{align*}
where we used the following Young inequality
$$
\vert\partial f\vert^{\frac{2}{1-2\beta}}e^{-\frac{8\beta f}{1-2\beta}}\Psi^{-\beta}\eta^{s-\frac{4-4\beta}{1-2\beta}}\leq \frac{1}{2(1-2\beta)}\vert\partial f\vert^4e^{-16\beta f}\Psi^{-2\beta}\eta^{s-2} + \frac{1-4\beta}{2-4\beta}\eta^{s-\frac{6-8\beta}{1-4\beta}}\,
$$
together with \eqref{trick} and \eqref{stima_f}. From \eqref{volumi} we obtain
\begin{equation}\label{eqi5}
 \int_{B_R} \mathrm{Re}(f_\alpha (D_{\bar{\alpha}}+ E_{\bar{\alpha}}) ) e^{-2\beta f}\eta^{s-2}  \leq \varepsilon R^2\mathcal{I}_1 + \frac{C}{R^{2}}\mathcal{I}_2+\frac{C}{R^{2}}\mathcal{I}_3  +
 \frac{C}{R^{2-36\beta}}\mathcal{I}_4+CR^2\, ,
\end{equation}
By using \eqref{eqi5} in \eqref{eqi4} we get
\begin{align}\label{eqi6}
2\beta \mathcal{I'}_4 \leq &  \,  \varepsilon R^2\mathcal{I}_1 + \frac{C}{R^{2}}\left(\mathcal{I}_2+\mathcal{I}_3  \right) +\frac{C}{R^{2-36\beta}}\mathcal{I}_4+ CR^2+ \varepsilon\mathcal{I}_4 ' + \frac{C_\varepsilon}{R^2} \mathcal{I}_3' \nonumber \\
\leq & \,   \varepsilon R^2\mathcal{I}_1 + \frac{C}{R^{2-36\beta}}\left(\mathcal{I}_2+\mathcal{I}_3'  +
\mathcal{I}_4'\right) + \varepsilon\mathcal{I}_4 ' + \frac{C_\varepsilon}{R^2} \mathcal{I}_3'+CR^2 \, ,
\end{align}
where we used \eqref{eqii3}, \eqref{eqi7} and the following Young inequality
$$
\mathrm{Re}(f_\alpha\eta_{\bar{\alpha}})\vert\partial f\vert^2 e^{-2\beta f}\eta^{s-3} \leq \vert\partial f\vert\vert\partial\eta\vert\vert\partial f\vert^2 e^{-2\beta f}\eta^{s-3}\leq \varepsilon \vert\partial f\vert^4 e^{-2\beta f} \eta^{s-2} + C_\varepsilon \vert\partial f\vert^2e^{-2\beta f} \vert\partial\eta\vert^2 \eta^{s-4}\, .
$$
Hence, by choosing $\beta, \varepsilon$ small enough and $R$ large enough in \eqref{eqi6} from \eqref{eqi7} we have
\begin{equation}\label{3.15}
\mathcal{I}_4\leq \mathcal{I}_4' \leq  2\varepsilon R^2\mathcal{I}_1 +\frac{C}{R^{2-36\beta}}\left(\mathcal{I}_2+\mathcal{I}_3' \right)+CR^2\, .
\end{equation}
Coming back to \eqref{eqi2} we obtain, by using \eqref{3.15},
\begin{align}\nonumber
\mathcal{I}_2\leq&C(\eps+\beta) R^2 \mathcal{I}_1+\frac{C(1+\beta)}{R^2}\mathcal{I}_3+C\mathcal{I}_4 \\
\leq& C \eps  R^2\mathcal{I}_1 +\frac{C}{R^2}\mathcal{I}_3'+ 2\varepsilon R^2\mathcal{I}_1 +\frac{C}{R^{2-36\beta}}\left(\mathcal{I}_2+\mathcal{I}_3' \right) +CR^2\nonumber \\
\leq & C \eps  R^2\mathcal{I}_1  +\frac{C}{R^{2-36\beta}}\left(\mathcal{I}_2+\mathcal{I}_3' \right)+CR^2\nonumber \, ,
\end{align}
i.e.  for $R$ large enough and $\beta$ small enough,
\begin{equation}\label{eqfun}
\mathcal{I}_2\leq C \eps  R^2\mathcal{I}_1  +\frac{C}{R^{2-36\beta}}\mathcal{I}_3'  +CR^2\, .
\end{equation}
By using \eqref{eqi8} in \eqref{eqfun} we get
\begin{equation}\label{eqfunbis}
\mathcal{I}_2\leq C \eps  R^2\mathcal{I}_1  +C R^{40\beta}+CR^2\leq   \eps  R^2\mathcal{I}_1 +CR^2\, .
\end{equation}
Finally, from \eqref{eqifinal} and \eqref{eqfunbis} we obtain
\begin{equation*}
\mathcal{I}_1\leq C \eps  \mathcal{I}_1  +C  \, ,
\end{equation*}
i.e. for $\varepsilon$ small enough
\begin{equation*}
\mathcal{I}_1=\int_{B_R} \mathcal{M}\Psi^{-\beta}\eta^s\leq  C,
\end{equation*}
and, from \eqref{eqfunbis},
$$
\mathcal{I}_2=\int_{B_R}|g|^2\Psi^{-\beta}\eta^{s-2}\leq CR^2.
$$
Then
$$
\int_{\mathbb{H}^1} \mathcal{M}\Psi^{-\beta}\leq C.
$$
In particular, from Lemma \ref{lemma0}, it follows that
\begin{align*}
\int_{\mathbb{H}^1} \mathcal{M}\Psi^{-\beta}\eta^s&\leq  \frac{C}{R} \left(\int_{\text{supp}|\partial\eta|}\mathcal{M}\Psi^{-\beta}\eta^s\right)^{1/2}\left(\int_{\text{supp}|\partial\eta|}|g|^2\Psi^{-\beta}\eta^{s-2}\right)^{1/2}\\
&\leq C\left(\int_{A_R} \mathcal{M}\Psi^{-\beta}\right)^{1/2}\longrightarrow 0
\end{align*}
as $R\to\infty$, i.e.
$$
\mathcal{M}\equiv 0.
$$
The conclusion follows arguing as in \cite[Section 3]{JL}.

\

\section{Proof of Theorem \ref{teo2}}\label{proof_teo2}

In this section $n\geq 2$ and given $R>1$, we choose, as in the previous section, a real cut-off function $\eta$ such that $\eta\equiv 1$ in $B_{R/2}$, $\eta\equiv 0$ in $B_R^c$ and $\vert \partial\eta\vert\leq \frac{c}{R}$ in $A_R:=B_R\setminus B_{R/2}$, and $s>4$.

From Lemma \ref{lemma1} we have
\begin{align}\label{eq132}\nonumber
\int_{\mathbb{H}^1} \eta^s\mathcal{M}\leq&  C \int_{\mathbb{H}^1} e^{2(n-1)f} \vert\partial\eta\vert^2 \eta^{s-2}\vert g\vert^2 \\
\leq& \dfrac{C}{R^2} \int_{A_R}e^{2(n-1)f}  \eta^{s-2} g\bar{g} \\ \nonumber
=& -\dfrac{C}{R^2} \int_{A_R}e^{2(n-1)f}  \eta^{s-2}\mathrm{Re}\left( f_{\alpha\bar{\alpha}}\bar{g}\right)
\end{align}
where we used \eqref{eq_g}. Integrating by parts we get
\begin{align*}
-\dfrac{C}{R^2} \int_{A_R} e^{2(n-1)f} \eta^{s-2} \mathrm{Re}\left(f_{\alpha\bar{\alpha}}\bar{g} \right)&= \dfrac{C}{R^2}  \int_{A_R} e^{2(n-1)f} \mathrm{Re}\left(\eta^{s-2} f_\alpha \bar{g}_{\bar{\alpha}}\right) \\
&\quad+ \dfrac{C(s-2)}{R^2}  \int_{A_R} e^{2(n-1)f} \mathrm{Re}\left(\eta^{s-3}f_\alpha\eta_{\bar{\alpha}}\bar{g}\right)\\
&\quad+\frac{C}{R^2} \int_{A_R} e^{2(n-1)f} \eta^{s-2}|\partial f|^2\mathrm{Re}(\bar g)\\
&=: \frac{C}{R^2} \left( \mathcal{J}_1 + \mathcal{J}_2+\mathcal{J}_3\right) \,.
\end{align*}
In particular, we have
\begin{equation}\label{eq122}
\int_{A_R} e^{2(n-1)f}  \eta^{s-2} |g|^2\leq C(\mathcal{J}_1+\mathcal{J}_2+\mathcal{J}_3).
\end{equation}
From \eqref{gbarra}, using Young and Cauchy-Schwartz inequalities, we obtain
\begin{align*}
\mathcal{J}_1&=\int_{A_R}  e^{2(n-1)f} \mathrm{Re}\left(\eta^{s-2} f_\alpha \bar{g}_{\bar{\alpha}}\right)\\
&=\int_{A_R} e^{2(n-1)f} \mathrm{Re}\left(\eta^{s-2} f_\alpha (D_{\bar{\alpha}}+E_{\bar{\alpha}}-G_{\bar{\alpha}})\right)+ 2\int_{A_R} e^{2(n-1)f}  \mathrm{Re}\left(\eta^{s-2}\vert\partial f\vert^2  \bar{g}\right) \\
&\leq  \frac{1}{2}\int_{A_R} e^{2(n-1)f} \eta^{s}\vert D_{\bar{\alpha}}+E_{\bar{\alpha}}-G_{\bar{\alpha}}\vert^2 + \frac{1}{2}\int_{A_R} e^{2(n-1)f} \eta^{s-4} \vert\partial f\vert^2 \\
&\quad+ 2 \int_{A_R} e^{2(n-1)f}  \left( \vert\partial f\vert^4 + e^{2f} \vert\partial f\vert^2\right)\eta^{s-2} \\
&\leq  \frac{1}{2}\int_{A_R} e^{2(n-1)f} \eta^{s}\left(\vert D_{\bar{\alpha}}+G_{\bar{\alpha}}\vert +\vert E_{\bar{\alpha}}-G_{\bar{\alpha}}\vert+ \vert G_{\bar{\alpha}}\vert\right)^2 + \frac{1}{2}\int_{A_R} e^{2(n-1)f} \eta^{s-4} \vert\partial f\vert^2 \\
&\quad+ \frac{1}{2} \int_{A_R}  e^{2(n+1)f} \eta^{s-2}  + 4\int_{A_R}  e^{2(n-1)f} \vert\partial f\vert^4 \eta^{s-2} \\
&\leq  \frac{3}{2}\int_{A_R} \eta^{s}\mathcal{M}  + \frac{1}{2}\int_{A_R} e^{2(n-1)f} \eta^{s-4} \vert\partial f\vert^2 \\
&\quad+ \frac{1}{2} \int_{A_R}  e^{2(n+1)f} \eta^{s-2}  + 4\int_{A_R} e^{2(n-1)f}  \vert\partial f\vert^4 \eta^{s-2}
\end{align*}
where we also used the definitions of $g$ in \eqref{g} and of $\mathcal{M}$ in Proposition \ref{prop_JL}.

Now, we prove that
\begin{equation}\label{claim2}
\int_{A_R} e^{2(n-1)f} \eta^{s-4} \vert\partial f\vert^2 \leq CR^2\, ,
\end{equation}
indeed, by Proposition \ref{gradest}, we have
$$
\sup_{A_R}|\partial f|^2\leq \frac{C}{R^2}.
$$
Moreover, by assumption, in $A_R$ we have
$$
e^{2(n-1)f}=u^{\frac{2(n-1)}{n}}=u^{\frac{2(Q-4)}{Q-2}}\leq \frac{C}{R^{Q-4}}.
$$
Therefore, using the volume estimate \eqref{volumi},
$$
\int_{A_R} e^{2(n-1)f} \eta^{s-4} \vert\partial f\vert^2 \leq \frac{C}{R^{Q-2}}|B_R|\leq CR^2\, ,
$$
and \eqref{claim2} follows. Then from \eqref{claim2} we have
\begin{equation}\label{eq162}
\int_{A_R} e^{2(n-1)f} \eta^{s-2} \vert\partial f\vert^4 \leq \frac{C}{R^2}\int_{A_R} e^{2(n-1)f} \eta^{s-2} \vert\partial f\vert^2\leq C\, ,
\end{equation}
Thus, from the previous computation, \eqref{claim2} and \eqref{eq162}, we obtain
\begin{align*}
\mathcal{J}_1=\int_{A_R}e^{2(n-1)f}  \mathrm{Re}\left(\eta^{s-2} f_\alpha \bar{g}_{\bar{\alpha}}\right) \leq &C\int_{A_R} \eta^{s}\mathcal{M}  + C R^2 + \frac{1}{2} \int_{A_R}  e^{2(n+1)f} \eta^{s-2}  + C\, .
\end{align*}
On the other hand
\begin{align*}
\mathcal{J}_2= \int_{A_R}e^{2(n-1)f}  \mathrm{Re}\left(\eta^{s-3}f_\alpha\eta_{\bar{\alpha}}\bar{g}\right) \leq& \frac12 \int_{A_R} e^{2(n-1)f} \eta^{s-4} \vert\partial f\vert^2 + \frac12 \int_{A_R} e^{2(n-1)f} \eta^{s-2} \vert g\vert^2 \vert\partial\eta\vert^2  \\
\leq & C R^2 + \frac{C}{R^2} \int_{A_R}e^{2(n-1)f}  \eta^{s-2} \vert g\vert^2 \,
\end{align*}
and, for every $\delta>0$ small enough,
\begin{align*}
\mathcal{J}_3= \int_{A_R} e^{2(n-1)f} \eta^{s-2}|\partial f|^2\mathrm{Re}(\bar g)\leq& \frac{1}{4\delta} \int_{A_R} e^{2(n-1)f} \eta^{s-2} \vert\partial f\vert^4 + \delta \int_{A_R} e^{2(n-1)f} \eta^{s-2} \vert g\vert^2  \\
\leq & \frac{C}{\delta} + \delta\int_{A_R}e^{2(n-1)f}  \eta^{s-2} \vert g\vert^2 \,
\end{align*}
Thus, from \eqref{eq122} and \eqref{claim2}, we obtain
\begin{align*}
 \int_{A_R}e^{2(n-1)f} \eta^{s-2} \vert g\vert^2 &\leq C\int_{A_R} \eta^{s}\mathcal{M}  +\frac{C}{\delta}R^2+\frac12\int_{A_R}e^{2(n+1)f}\eta^{s-2}+\left(\frac{C}{R^2}+\delta\right) \int_{A_R}e^{2(n-1)f} \eta^{s-2} \vert g\vert^2\,.
\end{align*}
For $\delta$ small enough and $R$ large enough, we get
\begin{align*}
 \int_{A_R}e^{2(n-1)f} \eta^{s-2} \vert g\vert^2 &\leq C\int_{A_R} \eta^{s}\mathcal{M}  + C R^2+\frac{2}{3}\int_{A_R}  e^{2(n+1)f} \eta^{s-2} \,.
\end{align*}
Recalling \eqref{mod_g}, we have
\begin{align*}
 \int_{A_R}e^{2(n-1)f} \eta^{s-2} \left(|\partial f|^4+2|\partial f|^2e^{2f}+\frac{1}{3}e^{4f}+f_0^2\right)&\leq C\int_{A_R} \eta^{s}\mathcal{M}  + C R^2
\end{align*}
and therefore
\begin{equation}\label{eq152}
 \int_{A_R}e^{2(n-1)f}\eta^{s-2} \vert g\vert^2\leq C\int_{A_R} \eta^{s}\mathcal{M}  + C R^2
\,.
\end{equation}
Going back to \eqref{eq132}, for $R$ large enough, we obtain
\begin{equation}\label{eq142}
 \int_{\H^n}\mathcal M\eta^s\leq \frac{C}{R^2}\int_{A_R}e^{2(n-1)f}\eta^{s-2} \vert g\vert^2 \leq C\,.
\end{equation}
Hence,
$$
 \int_{\H^n}\mathcal M\leq C\quad\text{and}\quad  \int_{A_R}e^{2(n-1)f}\eta^{s-2} \vert g\vert^2\leq CR^2.
$$
In particular, from Lemma \ref{lemma1}, it follows that
\begin{align*}
\int_{\mathbb{H}^n} \mathcal{M}\eta^s&\leq  \frac{C}{R} \left(\int_{\text{supp}|\partial\eta|}\mathcal{M}\eta^s\right)^{1/2}\left(\int_{\text{supp}|\partial\eta|}e^{2(n-1)f} |g|^2\eta^{s-2}\right)^{1/2}\\
&\leq C\left(\int_{A_R} \mathcal{M}\right)^{1/2}\longrightarrow 0
\end{align*}
as $R\to\infty$, i.e.
$$
\mathcal{M}\equiv 0.
$$
The conclusion follows arguing as in \cite[Section 3]{JL}.

\

\appendix

\section{Gradient estimates}\label{grad_est}

In this section we will prove some gradient estimates on positive solutions of equation \eqref{heis_crit}, that we need in the proof of Theorem \ref{teo2}.

\begin{proposition}\label{gradest}
  Let $n\in\mathbb{N}$ and let $u$ be a positive solution of \eqref{heis_crit}. If
\begin{equation}\label{decay1}
  u(\xi)\leq\frac{C}{1+|\xi|^\frac{Q-2}{2}}\quad\forall\xi\in \H^n,
\end{equation}
then there exists $C>0$ such that for every large enough $R>0$
\begin{equation}\label{gradest1}
\sup_{B_{2R}\setminus B_R}\frac{|\partial u|}{u}\leq \frac{C}{R} .
\end{equation}
\end{proposition}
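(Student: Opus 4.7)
The plan is a rescaling argument that exploits the invariance of the critical equation under the combined action of left translations and Heisenberg dilations $\delta_R(z,t) := (Rz, R^2 t)$. Given $\xi_0 \in B_{2R}\setminus B_R$, I would define
$$
v(\xi) := R^{(Q-2)/2}\, u(\xi_0 \circ \delta_R \xi),
$$
and first check, using the left-invariance of $Z_\alpha$ and the homogeneity $\Delta_{\H^n}(f\circ\delta_R) = R^2(\Delta_{\H^n}f)\circ\delta_R$, that $v$ solves the same equation $-\Delta_{\H^n} v = 2n^2 v^{q^\ast}$ on a ball in $\H^n$; the exponent $(Q-2)/2$ is forced by $(q^\ast-1)(Q-2)/2 = 2$. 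A direct computation gives the scaling identities
$$
Z_\alpha v(0) = R^{Q/2}(Z_\alpha u)(\xi_0), \qquad v(0) = R^{(Q-2)/2}u(\xi_0),
$$
so that any bound $|\partial v|(0)/v(0) \leq C$ translates immediately into $|\partial u|(\xi_0)/u(\xi_0) \leq C/R$, which is the desired conclusion once $\xi_0$ is allowed to vary over $B_{2R}\setminus B_R$.

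Next I would use the decay hypothesis \eqref{decay1} to bound $v$ from above on a ball of fixed radius independent of $R$. For $\xi\in B_{1/2}(0)\subset\H^n$, the triangle inequality for the Heisenberg norm yields $|\xi_0\circ\delta_R\xi| \geq |\xi_0| - R|\xi| \geq R/2$, and hence
$$
v(\xi) \leq R^{(Q-2)/2}\cdot\frac{C}{1+(R/2)^{(Q-2)/2}} \leq C_1,
$$
uniformly in $R$ large and $\xi_0\in B_{2R}\setminus B_R$.

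With $\|v\|_{L^\infty(B_{1/2})}\leq C_1$, I would rewrite the equation as a linear sub-elliptic equation $-\Delta_{\H^n}v = c(\xi)v$ with $c(\xi):=2n^2 v^{q^\ast-1}(\xi)$ uniformly bounded, and invoke two classical tools for the sub-Laplacian whose constants depend only on $\|v\|_{L^\infty(B_{1/2})}$ and the structural constants: (i) the Moser-type Harnack inequality for positive solutions of sub-Laplacian equations with bounded coefficients (standard in the Heisenberg setting), which gives $\inf_{B_{1/4}} v \geq c_2 > 0$; and (ii) interior $C^1$/Schauder-type estimates for sub-Laplacian equations with bounded right-hand side (Folland--Stein / Rothschild--Stein / Jerison), which give $\sup_{B_{1/4}}|\partial v|\leq C_3$. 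Combining the two yields $|\partial v|(0)/v(0)\leq C$ with a constant independent of $R$ and $\xi_0$, and scaling back proves \eqref{gradest1}.

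The main obstacle I expect is not in the scaling or decay estimates, which are routine once one is careful with the left-invariance of $Z_\alpha$ and its $\delta_R$-homogeneity, but rather in invoking the two subelliptic ingredients with the correct \emph{uniform} dependence on the bounded coefficient $c$. Both the Harnack inequality and the interior gradient estimate are by now standard in the Heisenberg group, but the precise references must be stated carefully so that the constants depend only on $\|v\|_{L^\infty}$ (hence are independent of $R$ and $\xi_0$), thereby making the scaling argument genuinely uniform.
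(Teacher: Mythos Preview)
Your rescaling approach is correct and genuinely different from the paper's proof. The paper proceeds by a direct Bochner-formula/maximum-principle argument in the spirit of Li--Yau type gradient estimates (following He--Zhao): with $f=\tfrac1n\log u$ they set $F=t(|\nabla f|^2+\gamma t\eta f_0^2)$, apply a sub-Riemannian Bochner inequality, and run a maximum principle on $H=\eta F$ with a careful choice of the parameters $\gamma,\varepsilon$ and a continuity-in-$t$ trick to locate the value of $H_*$. That argument is self-contained but substantially longer and more delicate; your route, by contrast, packages all the analysis into the scaling invariance of the critical equation together with two black-box subelliptic estimates (Harnack and interior $C^1$), which is the more standard PDE argument and has the advantage of making the role of the decay hypothesis \eqref{decay1} completely transparent.

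One point to correct: the Harnack inequality does \emph{not} give a uniform lower bound $\inf_{B_{1/4}} v\ge c_2>0$ with $c_2$ depending only on $\|v\|_{L^\infty(B_{1/2})}$ --- indeed the superharmonicity lower bound $u\gtrsim |\xi|^{-(Q-2)}$ only yields $v\gtrsim R^{-(Q-2)/2}$ after rescaling, which degenerates. What Harnack does give, for a positive solution of $-\Delta_{\H^n}v=c(\xi)v$ with $\|c\|_{L^\infty(B_{1/2})}\le M$, is $\sup_{B_{1/4}}v\le C(n,M)\,v(0)$. Combining this with the interior estimate $|\partial v|(0)\le C\sup_{B_{1/4}}v$ (from subelliptic Schauder with bounded right-hand side) yields $|\partial v|(0)\le C\,v(0)$ directly, without ever needing a uniform positive lower bound on $v$. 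With this small adjustment your argument goes through.
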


\begin{remark}
  We explicitly note that, by Proposition \ref{gradest}, if $u$ is a positive solution of \eqref{heis_crit} satisfying \eqref{decay1} and if $f=\frac{1}{n}\log u$ then
  $$
  \sup_{B_{2R}\setminus B_R}|\partial f|^2\leq \frac{C}{R^2}
  $$
  for some $C>0$ and every large enough $R>0$.
\end{remark}

\begin{proof}[Proof of Proposition \ref{gradest}]
  The proof follows arguments similar in spirit to \cite{HeZa}, where the authors consider harmonic functions for the sub-Laplacian. Let $f=\frac{1}{n}\log u$ and let
  $$
  \nabla f:=f_{\bar{\alpha}}Z_\alpha+f_\alpha Z_{\bar{\alpha}}.
  $$
  Then we have, see also \eqref{eq_f},
  \begin{align*}
    |\nabla f|^2 & = 2 f_{\bar{\alpha}}f_\alpha=2|\partial f|^2, \\
      \Delta_{\mathbb{H}^n} f & = -n |\nabla f|^2 - 2n e^{2f},\\
      \nabla \Delta_{\mathbb{H}^n} f & = -n\nabla \Delta_{\mathbb{H}^n} f -4ne^{2f}\nabla f.
  \end{align*}
Now if $\eta$ is any nonnegative smooth cutoff function on $\mathbb{H}^n$ we define for every $t\in(0,1]$
 $$
 F:=t(|\nabla f|^2+\gamma t\eta f_0^2)\, ,
 $$
 where $\gamma>0$ is to be chosen later.
 Then we have
 \begin{equation}\label{eq3}
 \Delta_{\mathbb{H}^n}F=t\Delta_{\mathbb{H}^n}|\nabla f|^2+\gamma t^2\eta \Delta_{\mathbb{H}^n}f_0^2+2\gamma t^2\langle\nabla\eta,\nabla f_0^2\rangle+\gamma t^2f_0^2\Delta_{\mathbb{H}^n}\eta.
 \end{equation}
We recall the following Bochner Formula for the sub-Laplacian on the Heisenberg group (see \cite{CKLT} and also \cite[Lemma 2.3]{HeZa}): for every function $\nu:\mathbb{H}^n\rightarrow(0,+\infty)$ we have
\begin{equation}\label{Bochner}
  \Delta_{\mathbb{H}^n}|\nabla f|^2\geq\frac{1}{n}(\Delta_{\mathbb{H}^n} f)^2+nf_0^2+2\langle\nabla f,\nabla \Delta_{\mathbb{H}^n} f\rangle -\frac{2}{\nu}|\nabla f|^2-2\nu|\nabla f_0|^2.
\end{equation}
Moreover for every $\varepsilon\in(0,\frac 14)$ we have
 \begin{align}
   (\Delta_{\mathbb{H}^n} f)^2&=\left(-\frac{\varepsilon}{t}F-(n-\varepsilon)|\nabla f|^2-2ne^{2f}+\varepsilon\gamma t\eta f_0^2\right)^2 \nonumber\\
  \label{eq2} &\geq \frac{\varepsilon^2}{t^2}F^2+2\frac{\varepsilon}{t}(n-\varepsilon)F|\nabla f|^2-2\varepsilon^2\gamma f_0^2F\eta,
 \end{align}
 and
\begin{equation}\label{eq1}
  \Delta_{\mathbb{H}^n}f_0^2=-8ne^{2f}f_0^2-2n\langle\nabla f,\nabla f_0^2\rangle+2|\nabla f_0|^2
\end{equation}
By \eqref{eq3}-\eqref{eq1} we obtain
  \begin{equation*}
    \begin{aligned}
    \Delta_{\mathbb{H}^n} F&\geq\frac{\varepsilon^2}{nt}F^2+2\varepsilon(1-\tfrac{\varepsilon}{n})F|\nabla f|^2-\frac{2}{n}t\varepsilon^2\gamma\eta f_0^2F+ntf_0^2\\
    &\,\,\,\,\,\, -8nte^{2f}|\nabla f|^2-2nt\langle\nabla|\nabla f|^2,\nabla f\rangle-\frac{2t}{\nu}|\nabla f|^2-2\nu t|\nabla f_0|^2\\
    &\,\,\,\,\,\, +t^2\gamma \eta(-8ne^{2f}f_0^2-2n\langle\nabla f,\nabla f_0^2\rangle +2|\nabla f_0|^2)\\
    &\,\,\,\,\,\, +4t^2\gamma f_0\langle\nabla\eta,\nabla f_0\rangle+t^2\gamma f_0^2\Delta_{\mathbb{H}^n} \eta.
    \end{aligned}
  \end{equation*}
  Since
  $$
  \langle\nabla|\nabla f|^2,\nabla f\rangle=\langle\tfrac{1}{t}\nabla F-t\gamma\eta\nabla f_0^2-t\gamma f_0^2\nabla\eta,\nabla f\rangle
  $$
  then we have
  \begin{equation*}
    \begin{aligned}
    \Delta_{\mathbb{H}^n} F&\geq\frac{\varepsilon^2}{nt}F^2+2\varepsilon(1-\tfrac{\varepsilon}{n})F|\nabla f|^2-\frac{2}{n}t\varepsilon^2\gamma\eta f_0^2F+ntf_0^2\\
    &\,\,\,\,\,\, -8nte^{2f}|\nabla f|^2-2n\langle\nabla F,\nabla f\rangle+2nt^2\gamma f_0^2\langle\nabla\eta,\nabla f\rangle-\frac{2t}{\nu}|\nabla f|^2-2\nu t|\nabla f_0|^2\\
    &\,\,\,\,\,\, +t^2\gamma \eta(-8ne^{2f}f_0^2 +2|\nabla f_0|^2)+4t^2\gamma f_0\langle\nabla\eta,\nabla f_0\rangle+t^2\gamma f_0^2\Delta_{\mathbb{H}^n} \eta.
    \end{aligned}
  \end{equation*}
For every $\xi$ such that $\eta(\xi)\neq0$ there holds
$$
|4t^2\gamma f_0\langle\nabla\eta,\nabla f_0\rangle|\leq t^2\gamma\eta|\nabla f_0|^2+4t^2\gamma\frac{|\nabla\eta|^2}{\eta}f_0^2,
$$
then
  \begin{equation*}
    \begin{aligned}
    \Delta_{\mathbb{H}^n} F&\geq\frac{\varepsilon^2}{nt}F^2+2\varepsilon(1-\tfrac{\varepsilon}{n})F|\nabla f|^2-\frac{2}{n}t\varepsilon^2\gamma\eta f_0^2F+ntf_0^2\\
    &\,\,\,\,\,\, -8nte^{2f}|\nabla f|^2-2n\langle\nabla F,\nabla f\rangle+2nt^2\gamma f_0^2\langle\nabla\eta,\nabla f\rangle-\frac{2t}{\nu}|\nabla f|^2-2\nu t|\nabla f_0|^2\\
    &\,\,\,\,\,\, +t^2\gamma \eta(-8ne^{2f}f_0^2 +2|\nabla f_0|^2)-t^2\gamma\eta|\nabla f_0|^2-4t^2\gamma\frac{|\nabla\eta|^2}{\eta}f_0^2+t^2\gamma f_0^2\Delta_{\mathbb{H}^n} \eta.
    \end{aligned}
  \end{equation*}
For every $\xi$ such that $\eta(\xi)\neq0$ we choose $\nu=\frac{t\gamma\eta}{2}$, then
  \begin{equation*}
    \begin{aligned}
    \Delta_{\mathbb{H}^n} F&\geq\frac{\varepsilon^2}{nt}F^2+2\varepsilon(1-\tfrac{\varepsilon}{n})F|\nabla f|^2-\frac{2}{n}t\varepsilon^2\gamma\eta f_0^2F+ntf_0^2\\
    &\,\,\,\,\,\, -8nte^{2f}|\nabla f|^2-2n\langle\nabla F,\nabla f\rangle+2nt^2\gamma f_0^2\langle\nabla\eta,\nabla f\rangle-\frac{4}{\gamma\eta}|\nabla f|^2\\
    &\,\,\,\,\,\, -8nt^2\gamma \eta e^{2f}f_0^2 -4t^2\gamma\frac{|\nabla\eta|^2}{\eta}f_0^2+t^2\gamma f_0^2\Delta_{\mathbb{H}^n} \eta.
    \end{aligned}
  \end{equation*}
Now note that for every $\xi$ such that $\eta(\xi)\neq0$
  $$
  |2nt^2\gamma f_0^2\langle\nabla\eta,\nabla f\rangle|\leq \varepsilon^2nt\gamma\eta f_0^2 F+\frac{nt^2\gamma}{\varepsilon^2}f_0^2\frac{|\nabla\eta|^2}{\eta},
  $$
  hence
 \begin{equation*}
    \begin{aligned}
    \Delta_{\mathbb{H}^n} F&\geq\frac{\varepsilon^2}{nt}F^2+2\varepsilon(1-\tfrac{\varepsilon}{n})F|\nabla f|^2-\left(n+\tfrac{2}{n}\right)t\varepsilon^2\gamma\eta f_0^2F+ntf_0^2\\
    &\,\,\,\,\,\, -8nte^{2f}|\nabla f|^2-2n\langle\nabla F,\nabla f\rangle-\frac{nt^2\gamma}{\varepsilon^2}\frac{|\nabla\eta|^2}{\eta}f_0^2-\frac{4}{\gamma\eta}|\nabla f|^2\\
    &\,\,\,\,\,\, -8nt^2\gamma \eta e^{2f}f_0^2 -4t^2\gamma\frac{|\nabla\eta|^2}{\eta}f_0^2+t^2\gamma f_0^2\Delta_{\mathbb{H}^n} \eta.
    \end{aligned}
  \end{equation*}
Let $H=\eta F$, then $H\geq0$ and for every $\xi$ such that $H(\xi)\neq0$
\begin{equation*}
    \begin{aligned}
    \Delta_{\mathbb{H}^n} H&\geq F\Delta_{\mathbb{H}^n}\eta +2\langle\nabla\eta,\nabla F\rangle +\eta\bigg[\frac{\varepsilon^2}{nt}F^2+2\varepsilon(1-\tfrac{\varepsilon}{n})F|\nabla f|^2-\left(n+\tfrac{2}{n}\right)t\varepsilon^2\gamma\eta f_0^2F\\
    &\,\,\,\,\,\, +ntf_0^2-8nte^{2f}|\nabla f|^2-2n\langle\nabla F,\nabla f\rangle-\frac{nt^2\gamma}{\varepsilon^2}\frac{|\nabla\eta|^2}{\eta}f_0^2-\frac{4}{\gamma\eta}|\nabla f|^2\\
    &\,\,\,\,\,\, -8nt^2\gamma \eta e^{2f}f_0^2 -4t^2\gamma\frac{|\nabla\eta|^2}{\eta}f_0^2+t^2\gamma f_0^2\Delta_{\mathbb{H}^n} \eta\bigg].
    \end{aligned}
  \end{equation*}
Let $\eta=\eta(|\xi|)$ be a radial, nonnegative, smooth cutoff function such that $\eta=0$ in $B_\frac{R}{2}\cup B^c_{\frac{5}{2}R}$, $\eta=1$ on $A_R=B_{2R}\setminus B_R$ and satisfying
$$
|\nabla\eta|\leq\frac{C}{R}\sqrt{\eta},\quad\Delta_{\mathbb{H}^n}\eta\geq-\frac{C}{R^2}
$$
on $\mathbb{H}^n$, for some $C>0$. Since $u$ satisfies \eqref{decay1} we have
$$
e^{2f}=u^\frac{2}{n}\leq \frac{\hat{C}}{R^2}
$$
in $B_{\frac{5}{2}R}\setminus B_\frac{R}{2}$. Since $t\in(0,1)$ and $0\leq\eta\leq1$, at any $\xi$ such that $H(\xi)>0$, which in particular must satisfy $\xi\in B_{\frac{5}{2}R}\setminus B_\frac{R}{2}$, we thus obtain
\begin{equation*}
    \begin{aligned}
    t\eta\Delta_{\mathbb{H}^n} H&\geq -\frac{Ct}{R^2}H+2t\langle\nabla\eta,\nabla H\rangle-2tF|\nabla\eta|^2 +\frac{\varepsilon^2}{n}H^2\\
    &\,\,\,\,\,\,+t\eta|\nabla f|^2\left(2\varepsilon(1-\tfrac{\varepsilon}{n})H-8ne^{2f}-\frac{4}{\gamma}\right)\\
    &\,\,\,\,\,\,+t^2\eta^2f_0^2\left(n-(n+\tfrac{2}{n})\varepsilon^2\gamma H-8n\gamma e^{2f}-\frac{C\gamma}{\varepsilon^2R^2}\right)\\
    &\,\,\,\,\,\,-2nt\eta\langle \nabla H,\nabla f\rangle+2nt\langle\nabla\eta,\nabla f\rangle H\\
    &\geq \frac{\varepsilon^2}{n}H^2-\frac{C}{\varepsilon R^2}H+2t\langle\nabla H,\nabla\eta-n\eta\nabla f\rangle\\
    &\,\,\,\,\,\,+t\eta|\nabla f|^2\left(2\varepsilon(1-\tfrac{\varepsilon}{n})H-\frac{8n\hat{C}}{R^2}-\frac{4}{\gamma}\right)\\
    &\,\,\,\,\,\,+t^2\eta^2f_0^2\left(n-(n+\tfrac{2}{n})\varepsilon^2\gamma H-\frac{8n\gamma \hat{C}}{R^2}-\frac{C\gamma}{\varepsilon^2R^2}\right)+2nt\langle\nabla\eta,\nabla f\rangle H.
    \end{aligned}
  \end{equation*}
Now we note that
$$
|2nt\langle\nabla\eta,\nabla f\rangle H|\leq\varepsilon tH|\nabla f|^2\eta+\frac{Ct}{\varepsilon} \frac{|\nabla\eta|^2}{\eta} H\leq \varepsilon tH|\nabla f|^2\eta+\frac{C}{\varepsilon R^2}H.
$$
Therefore we have
\begin{equation*}
    \begin{aligned}
    t\eta\Delta_{\mathbb{H}^n} H-2t\langle\nabla H,\nabla\eta-n\eta\nabla f\rangle&\geq \frac{\varepsilon^2}{n}H^2-\frac{C}{\varepsilon R^2}H\\
    &\,\,\,\,\,\,+t\eta|\nabla f|^2\left(\varepsilon(1-\tfrac{2\varepsilon}{n})H-\frac{8n\hat{C}}{R^2}-\frac{4}{\gamma}\right)\\
    &\,\,\,\,\,\,+t^2\eta^2f_0^2\left(n-(n+\tfrac{2}{n})\varepsilon^2\gamma H-\frac{8n\gamma \hat{C}}{R^2}-\frac{C\gamma}{\varepsilon^2R^2}\right).
    \end{aligned}
  \end{equation*}
Choosing $\gamma=\varepsilon^\frac{5}{2}R^2$ we obtain
\begin{equation*}
    \begin{aligned}
    t\eta\Delta_{\mathbb{H}^n} H-2t\langle\nabla H,\nabla\eta-n\eta\nabla f\rangle&\geq \left(\varepsilon^2H-\frac{C_0}{\varepsilon R^2}\right)\frac{H}{n}\\
    &\,\,\,\,\,\,+t\eta|\nabla f|^2\left(\frac{\varepsilon}{2}H-\frac{C_0}{\varepsilon^\frac{5}{2}R^2}\right)\\
    &\,\,\,\,\,\,+t^2\eta^2f_0^2\left(n-(n+\tfrac{2}{n})\varepsilon^\frac{9}{2}R^2 H-C_0\varepsilon^\frac{1}{2}\right),
    \end{aligned}
  \end{equation*}
  for some $C_0>0$. $H$ achieves its positive maximum on $B_{\frac{5}{2}R}\setminus B_{\frac{R}{2}}$ at some point $p\in B_{\frac{5}{2}R}\setminus B_{\frac{R}{2}}$. If $H_*=H(p)$ and $\eta_*=\eta(p)$ then
  \begin{equation}\label{eq4}
  \begin{aligned}
  0\geq &\left(\varepsilon^2H_*-\frac{C_0}{\varepsilon R^2}\right)\frac{H_*}{n}+t\eta_*|\nabla f|^2\left(\frac{\varepsilon}{2}H_*-\frac{C_0}{\varepsilon^\frac{5}{2}R^2}\right)\\ &+t^2\eta^2_*f_0^2\left(n-(n+\tfrac{2}{n})\varepsilon^\frac{9}{2}R^2 H_*-C_0\varepsilon^\frac{1}{2}\right).
  \end{aligned}
  \end{equation}
  We claim that there exist $C>0$, $R_0>0$ such that at $t=1$
  $$
  H_*<\frac{2C}{R}
  $$
  for every $R\geq R_0$. If not, by contradiction, for every $C>0$, $R_0>0$ there exists $R_1\geq R_0$ such that at $t=1$
  $$
  H_*\geq\frac{2C}{R_1^2}.
  $$
  Now note that $H$ (and thus also $H_*$) are continuous functions in $t\in[0,1]$, satisfying $H_*=0$ for $t=0$ and $H_*\geq\frac{2C}{R_1^2}$ for $t=1$. Then for every $C>0$, $R_0>0$ there exist $R_1\geq R_0$, $t_1\in(0,1)$ such that at $t=t_1$
  $$
  H_*=\frac{C}{R^2_1}.
  $$
  We choose $C=\frac{1}{\varepsilon^4}$, $R_0=1$, then at $t=t_1$, $R=R_1$ we have from \eqref{eq4}
  \begin{equation*}
  0\geq \left(\frac{1}{\varepsilon^2R_1^2}-\frac{C_0}{\varepsilon R_1^2}\right)\frac{H_*}{n} +t\eta_*|\nabla f|^2\left(\frac{1}{2\varepsilon^3R_1^2}-\frac{C_0}{\varepsilon^\frac{5}{2}R_1^2}\right)+t^2\eta^2_*f_0^2\left(n-(n+\tfrac{2}{n}+C_0)\varepsilon^\frac{1}{2}\right)>0
  \end{equation*}
  if we choose $\varepsilon>0$ small enough, thus reaching a contradiction. We conclude that there exist $C,R_0>0$ such that at $t=1$
  $$
  H_*=\max_{B_{\frac{5}{2}R}\setminus B_{\frac{R}{2}}} H\leq\frac{C}{R^2}
  $$
  for every $R\geq R_0$, which in turn implies that
  $$
  \sup_{B_{2R}\setminus B_R}|\nabla f|^2\leq\max_{B_{\frac{5}{2}R}\setminus B_{\frac{R}{2}}} H\leq\frac{C}{R^2}
  $$
  for every $R\geq R_0$. Recalling the definition of $f$, we conclude that \eqref{gradest1} holds.
\end{proof}

We conclude the section with a second gradient estimate, that may have some independent interest. Since we do not need this estimate in the proof of our other results, and since its proof follows using similar arguments as those employed in Proposition \ref{gradest}, we state the result without proof.

\begin{proposition}\label{gradest2}
  Let $n\in\mathbb{N}$ and let $u$ be a positive solution of \eqref{heis_crit}. If $u$ is bounded on $\mathbb{H}^n$, then $\frac{|\partial u|}{u}$ is also bounded on $\mathbb{H}^n$.
\end{proposition}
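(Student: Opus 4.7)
The strategy closely parallels the proof of Proposition \ref{gradest}: a localized Bochner argument applied to $f:=\tfrac{1}{n}\log u$. Since $\frac{|\partial u|}{u}=n|\partial f|=\tfrac{n}{\sqrt{2}}|\nabla f|$, the claim reduces to a uniform bound on $|\nabla f|^2$ on $\mathbb{H}^n$. The decay assumption used in Proposition \ref{gradest}, namely $e^{2f}\leq \hat C/R^2$, is replaced here by the simpler uniform bound $e^{2f}\leq M:=(\sup_{\mathbb{H}^n}u)^{2/n}<\infty$.

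Fix an arbitrary $\xi_0\in\mathbb{H}^n$ and a radial cutoff $\eta\in C_c^\infty(B_2(\xi_0))$ of fixed size (independent of $\xi_0$), with $\eta\equiv 1$ on $B_1(\xi_0)$, $|\nabla\eta|\leq C\sqrt{\eta}$ and $\Delta_{\mathbb{H}^n}\eta\geq -C$. The plan is to introduce the same test function as in Proposition \ref{gradest},
$$F:=t\bigl(|\nabla f|^2+\gamma t\eta f_0^2\bigr),\qquad H:=\eta F,\qquad t\in(0,1],$$
with $\gamma>0$ depending only on $n$ and $M$, to be fixed at the end. Applying the Heisenberg Bochner formula \eqref{Bochner} with the choice $\nu=\tfrac{\gamma t\eta}{2}$ (so that the term $-2\nu|\nabla f_0|^2$ is absorbed by the positive $2t^2\gamma\eta|\nabla f_0|^2$ in \eqref{eq1}), together with the equation $\Delta_{\mathbb{H}^n}f=-n|\nabla f|^2-2ne^{2f}$, and repeating the computation of Proposition \ref{gradest} verbatim---except that every factor $\hat C/R^2$ produced there by the decay hypothesis is now replaced by the constant $M$---one arrives at a pointwise inequality on $\{H>0\}$ of the form
$$t\eta\,\Delta_{\mathbb{H}^n}H-2t\langle\nabla H,\,\nabla\eta-n\eta\nabla f\rangle\;\geq\;\frac{\varepsilon^2}{n}H^2-A_1 H-A_2,$$
for every $\varepsilon\in(0,\tfrac14)$, where $A_1,A_2>0$ depend only on $n,M,\gamma,\varepsilon$ and on the universal cutoff constants, and are independent both of $\xi_0$ and of $t\in(0,1]$.

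At a positive maximum point $p\in\overline{B_2(\xi_0)}$ of $H$ (which exists by continuity and compact support) one has $\nabla H(p)=0$ and $\Delta_{\mathbb{H}^n}H(p)\leq 0$, so the previous inequality collapses to the scalar estimate
$$0\;\geq\;\frac{\varepsilon^2}{n}H_*^2-A_1 H_*-A_2,$$
yielding $H_*\leq C(n,M)$ after fixing $\varepsilon$ and $\gamma$ small enough in terms of $n$ and $M$. Running in this setting the continuity-in-$t$ argument of Proposition \ref{gradest} (now with constants independent of any ambient length scale) gives at $t=1$
$$\sup_{B_1(\xi_0)}|\nabla f|^2\;\leq\; H_*\;\leq\; C(n,M),$$
and, since $\xi_0$ is arbitrary, the global bound $\sup_{\mathbb{H}^n}|\nabla f|\leq C(n,M)$, which is equivalent to the claim. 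The main technical obstacle, exactly as in Proposition \ref{gradest}, is the simultaneous calibration of $\gamma$ and $\varepsilon$ so that the leading quadratic term $\tfrac{\varepsilon^2}{n}H^2$ dominates all error contributions of orders $H|\nabla f|^2$ and $H f_0^2$, together with the inhomogeneous remainders coming from $e^{2f}\leq M$ and from the derivatives of $\eta$; the absence of an $R$-dependent scale here actually makes this calibration cleaner than in the decay-assumption case.
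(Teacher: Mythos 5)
Your overall strategy is exactly the one the paper has in mind: Proposition \ref{gradest2} is stated without proof, with the remark that it ``follows using similar arguments as those employed in Proposition \ref{gradest}'', and your plan (run the same localized Bochner/maximum-principle argument for $H=\eta F$ on unit-scale balls, replacing the decay bound $e^{2f}\leq \hat C/R^2$ by the uniform bound $e^{2f}\leq M$, with constants independent of the center by left-invariance) is the natural implementation of that remark.

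One step, however, would fail as literally written. You claim a pointwise inequality of the form $t\eta\,\Delta_{\mathbb{H}^n}H-2t\langle\nabla H,\nabla\eta-n\eta\nabla f\rangle\geq \frac{\varepsilon^2}{n}H^2-A_1H-A_2$ with $A_1,A_2$ independent of $H$, and then conclude $H_*\leq C(n,M)$ directly from the maximum principle. But the computation of Proposition \ref{gradest} produces the term $-\left(n+\tfrac{2}{n}\right)t^2\eta^2 f_0^2\,\varepsilon^2\gamma H$ (coming from squaring $\Delta_{\mathbb{H}^n}f$ in \eqref{eq2}), and the only available bound $t^2\eta^2 f_0^2\leq H/\gamma$ turns this into $-\left(n+\tfrac{2}{n}\right)\varepsilon^2H^2$, a \emph{negative quadratic} term whose coefficient exceeds the good coefficient $\tfrac{\varepsilon^2}{n}$. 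So the inequality you state cannot be reached by direct absorption, and the quadratic scalar estimate does not close on its own. This is precisely why the paper's proof of Proposition \ref{gradest} keeps the $f_0^2$ bracket intact and uses the continuity-in-$t$ argument: one only needs the bracket $n-(n+\tfrac{2}{n})\varepsilon^2\gamma H_*-\dots$ to be nonnegative at the exact threshold value of $H_*$ produced by the intermediate-value argument (here $H_*=\varepsilon^{-4}$, with $\gamma=\varepsilon^{5/2}$, so that $\varepsilon^2\gamma H_*=\varepsilon^{1/2}$ is small), at which level the remaining brackets are also positive for $\varepsilon$ small depending on $n$ and $M$. You do invoke the continuity argument at the end, so the proof is repairable essentially as you outline it, but it should replace, not supplement, the direct quadratic absorption: as presented, the two mechanisms are offered as if both work, and only the threshold/continuity one does.
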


\

\

\begin{ackn}
\noindent
The first author is member of the GNSAGA,  Gruppo Nazionale per le Strutture Algebriche, Geometriche e le loro Applicazioni of INdAM. The third and the fourth authors are members of GNAMPA, Gruppo Nazionale per l'Analisi Matematica, la Probabilit\`a e le loro Applicazioni of INdAM.

Moreover, the first and the fourth authors are partially supported by the project PRIN 2022 ``Differential-geometric aspects of manifolds via Global Analysis'', the second author is partially supported by NSF Grant DMS-2247410, while the third author has been partially supported by the project PRIN 2022 ``Geometric-Analytic Methods for PDEs and Applications''.
\end{ackn}

\

\noindent{\bf Data availability statement}

\noindent Data sharing not applicable to this article as no datasets were generated or analysed during the current study.

\

\

\

\

\

\end{document}